\numberwithin{equation}{section}
\theoremstyle{plain}
\newtheorem{theorem}{Theorem}
\newtheorem{lemma}[theorem]{Lemma}
\newtheorem{corollary}[theorem]{Corollary}
\newtheorem{proposition}[theorem]{Proposition}
\newtheorem*{theorem*}{Theorem}
\newtheorem*{conjecture*}{Conjecture}
\theoremstyle{definition}
\newtheorem{remark}[theorem]{Remark}
\newtheorem*{definition}{Definition}
\newcommand{\CC}{{\mathbb{C}}}
\newcommand{\QQ}{{\mathbb{Q}}}
\newcommand{\RR}{{\mathbb{R}}}
\newcommand{\ZZ}{{\mathbb{Z}}}
\def\C{{\mathcal C}}
\def\H{{\mathcal H}}
\def\O{{\mathcal O}}
\begin{document}
\title[Variance of the exponents]{Variance of the exponents of orbifold Landau-Ginzburg models}
%\date{\today}
\author{Wolfgang Ebeling and Atsushi Takahashi}
\address{Institut f\"ur Algebraische Geometrie, Leibniz Universit\"at Hannover, Postfach 6009, D-30060 Hannover, Germany}
\email{ebeling@math.uni-hannover.de}
\address{
Department of Mathematics, Graduate School of Science, Osaka University, 
Toyonaka Osaka, 560-0043, Japan}
\email{takahashi@math.sci.osaka-u.ac.jp}
\subjclass[2010]{32S25, 32S35, 14L30}
\date{}
\begin{abstract} We prove a formula for the variance of the set of exponents of a non-degenerate weighted homogeneous polynomial with an action of a diagonal subgroup of ${\rm SL}_n(\CC)$.
\end{abstract}
\maketitle
%%%%%%%%%%%%%%%%%%%%%%%%%%%%%%%%%%%%%%%%%%%%%%%%%%%%%%%%%%%%%%%%
\section*{Introduction}

Let $X$ be a smooth compact K\"ahler manifold of dimension $n$. 
The Hodge numbers $h^{p,q}(X):=\dim_\CC H^q(X,\Omega_X^p)$, $p,q \in \ZZ$, are some  of the most important numerical invariants 
of $X$. 
They satisfy
\[
h^{p,q}(X)=h^{q,p}(X),\quad p,q\in\ZZ,
\]
and the Serre duality
\[
h^{p,q}(X)=h^{n-p,n-q}(X),\quad p,q\in\ZZ.
\]
The Euler number $\chi(X)$ can also be written in terms of the Hodge numbers as
\[
\chi(X)=\sum_{p,q\in\ZZ}(-1)^{p+q} h^{p,q}(X).
\]
One can easily calculate the expectation value of the distribution $\{q \in \ZZ \, | \, h^{p,q}(X) \neq 0\}$, which is given by the formula
\[
\sum_{p,q\in\ZZ}(-1)^{p+q} q\cdot h^{p,q}(X)=\frac{1}{2}n\cdot\chi(X).
\]
Equivalently, this can be rewritten as 
\[
\sum_{p,q\in\ZZ}(-1)^{p+q} \left(q- \frac{n}{2} \right) h^{p,q}(X)=0.
\]
This means nothing else but that the mean of the distribution $\{q \in \ZZ \, | \, h^{p,q}(X) \neq 0\}$ is $n/2$.
It is then natural to ask what is the variance of this distribution. A formula for this variance was given by A.~Libgober and J.~Wood \cite{LW} and L.~Borisov \cite{B}: 
\begin{theorem}[Libgober-Wood, Borisov] \label{thm:lwb}
One has
\begin{equation}
\sum_{p,q\in\ZZ}(-1)^{p+q} \left(q- \frac{n}{2} \right)^2 h^{p,q}(X)
=\frac{1}{12}n\cdot \chi(X)+\frac{1}{6}\int_X c_1(X)\cup c_{n-1}(X),
\end{equation}
where $c_i(X)$ denotes the $i$-th Chern class of $X$. 
\end{theorem}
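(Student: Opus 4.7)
The plan is to use the Hirzebruch-Riemann-Roch formula applied to Hirzebruch's $\chi_y$-genus
$$\chi_y(X) := \sum_{p,q\in\ZZ}(-1)^q h^{p,q}(X) y^p.$$
Substituting $y = -e^{-t}$ and multiplying by $e^{nt/2}$ produces a generating function
$$\Phi(t) := e^{nt/2}\chi_{-e^{-t}}(X) = \sum_{k\geq 0}\frac{t^k}{k!}\sum_{p,q\in\ZZ}(-1)^{p+q}\left(\frac{n}{2}-q\right)^k h^{p,q}(X),$$
where I have used the Hodge symmetry $h^{p,q} = h^{q,p}$ to interchange $p$ and $q$ in the exponential. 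Thus the variance on the left-hand side of the theorem is exactly twice the coefficient of $t^2$ in $\Phi(t)$; the constant term reproduces the identity $\chi_{-1}(X)=\chi(X)$, and the vanishing of the coefficient of $t$ is the mean identity already noted.

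By Hirzebruch-Riemann-Roch,
$$\Phi(t) = \int_X \prod_{i=1}^n R(x_i,t), \qquad R(x,t) := \frac{x\left(e^{t/2}-e^{-t/2-x}\right)}{1-e^{-x}},$$
where $x_1,\ldots,x_n$ denote the Chern roots of $TX$. A direct Taylor expansion yields
$$R(x,t) = x + \frac{t}{2}\,x\coth(x/2) + \frac{t^2}{8}\,x + O(t^3),$$
so the coefficient of $t^2$ in $\prod_i R(x_i,t)$ splits into two contributions,
$$\frac{1}{8}\sum_i x_i\prod_{j\neq i}x_j \;+\; \frac{1}{4}\sum_{i<j} x_i\coth(x_i/2)\,x_j\coth(x_j/2)\prod_{k\neq i,j} x_k.$$
The first sum equals $\tfrac{n}{8}c_n(X)$. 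For the second sum I use the Laurent expansion $x\coth(x/2) = 2 + x^2/6 + O(x^4)$ and observe that, since $\int_X$ extracts only the degree-$n$ part, the leading constant $4$ contributes in degree $n-2$ and the higher Bernoulli corrections contribute in degree $\geq n+2$; only the cross-terms of the form $(x_i^2/6)\cdot 2$ survive, reducing the second sum to $\tfrac{1}{12}\,A$ with $A := \sum_{i\neq j}x_i^2\prod_{k\neq i,j}x_k$.

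The key combinatorial step is the symmetric-function identity
$$A \;=\; e_1(x)\,e_{n-1}(x) - n\,e_n(x) \;=\; c_1(X)\cup c_{n-1}(X) - n\,c_n(X),$$
proved by expanding $e_1\,e_{n-1} = \bigl(\sum_i x_i\bigr)\bigl(\sum_j \prod_{k\neq j}x_k\bigr)$ and separating the diagonal terms $i=j$, which sum to $n\,e_n$, from the off-diagonal terms $i\neq j$, where $x_i\prod_{k\neq j}x_k$ acquires an extra factor of $x_i$ and hence equals $x_i^2\prod_{k\neq i,j}x_k$. Assembling the pieces, the coefficient of $t^2$ in $\Phi(t)$ equals
$$\frac{n}{8}\chi(X) + \frac{1}{12}\!\left(\int_X c_1(X)\cup c_{n-1}(X) - n\chi(X)\right) = \frac{n}{24}\chi(X) + \frac{1}{12}\int_X c_1(X)\cup c_{n-1}(X),$$
and doubling yields the theorem. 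The principal obstacle is the bookkeeping in the $t^2$-expansion: one must verify carefully that every higher Bernoulli correction to $x\coth(x/2)$ contributes in cohomological degree strictly greater than $n$, so that exactly the displayed terms survive the integration over $X$.
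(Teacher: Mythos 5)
The paper does not prove this statement: Theorem~\ref{thm:lwb} is quoted from Libgober--Wood \cite{LW} and Borisov \cite{B} without proof, so there is no internal argument to compare against. Your proof is correct and is essentially the original Libgober--Wood argument. The bookkeeping all checks out: with $y=-e^{-t}$ one has $e^{nt/2}\chi_{-e^{-t}}(X)=\sum_{p,q}(-1)^{p+q}h^{p,q}e^{(n/2-p)t}$, so after Hodge symmetry the variance is indeed twice the $t^2$-coefficient; the factor $R(x,t)=\frac{x}{1-e^{-x}}\bigl((1-e^{-x})+\frac{t}{2}(1+e^{-x})+\frac{t^2}{8}(1-e^{-x})+O(t^3)\bigr)$ gives exactly your expansion since $\frac{1+e^{-x}}{1-e^{-x}}=\coth(x/2)$; the degree count in the quadratic term is clean because $x\coth(x/2)=2+\frac{x^2}{6}-\frac{x^4}{360}+\cdots$ has only even-degree terms, so against $\prod_{k\neq i,j}x_k$ of degree $n-2$ only the terms $2\cdot\frac{x_i^2}{6}$ and $2\cdot\frac{x_j^2}{6}$ land in degree $n$ (and the same parity argument shows the $t^1$-coefficient integrates to zero, recovering the mean identity); and the symmetric-function identity $\sum_{i\neq j}x_i^2\prod_{k\neq i,j}x_k=e_1e_{n-1}-ne_n$ is verified correctly. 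The final assembly $2\bigl(\frac{n}{8}\chi-\frac{n}{12}\chi+\frac{1}{12}\int_Xc_1c_{n-1}\bigr)=\frac{n}{12}\chi+\frac{1}{6}\int_Xc_1c_{n-1}$ is right. It is worth noting that your computation of the second $t$-derivative of the (shifted) $\chi_y$-genus is structurally the same device the paper uses in the Landau--Ginzburg setting, where ${\rm Var}_{(f,G)}=\lim_{y\to 1}\frac{d}{dy}\bigl(y\frac{d}{dy}\chi(f,G)(y)\bigr)$ is evaluated via Proposition~\ref{prop:lim}; your argument makes that analogy explicit on the geometric side.
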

If the first Chern class $c_1(X)$ is numerically zero, then the above formula becomes
\begin{equation}
\sum_{p,q\in\ZZ}(-1)^{p+q} \left(q- \frac{n}{2} \right)^2 h^{p,q}(X)
=\frac{1}{12}n\cdot \chi(X).
\end{equation}
Similar phenomena were discovered in singularity theory. 
Let us consider a polynomial $f(x_1,\dots, x_n)$ with an isolated singularity at the origin.
There, the analogue of the set $\{q \in \ZZ \, | \, h^{p,q}(X) \neq 0\}$ above will be the set of the {\em exponents} of $f(x_1,\dots, x_n)$,  
which is a set of rational numbers and is also one of the most important numerical invariants defined by the mixed Hodge structure 
associated to $f(x_1,\dots, x_n)$. Let us give two important examples. 
First, suppose that $f(x_1,\dots, x_n)$ is a non-degenerate weighted homogeneous polynomial, namely, a polynomial with an isolated singularity at the origin with the property that 
there are positive rational numbers $w_i$, $i=1,\dots ,n$, such that
 $f(\lambda^{w_1}x_1,\dots ,\lambda^{w_n}x_n)=\lambda f(x_1,\dots,x_n)$, $\lambda\in\CC\backslash\{0\}$. 
We have the following properties of the exponents of $f$:
\begin{theorem}[cf.\ \cite{st:1}]\label{thm:st}
Let $q_1\le q_2\le \dots\le q_\mu$ be the exponents of $f$, where $\mu$ is the Milnor number of $f$ defined by
\[
\mu:=\dim_\CC\CC[x_1,\dots ,x_n]\left/\left(\frac{\partial f}{\partial x_1},\dots, \frac{\partial f}{\partial x_n}\right)\right. .
\]
Then one has 
\[
\mu= (-1)^n \prod_{i=1}^n\left(1-\frac{1}{w_i}\right)
\]
and
\[
\sum_{i=1}^\mu y^{q_i-\frac{n}{2}}=(-1)^n\prod_{i=1}^n\frac{y^{\frac{1}{2}}-y^{w_i-\frac{1}{2}}}{1-y^{w_i}}.
\]
In particular, one has a duality of exponents $q_i+q_{\mu-i+1}=n$, $i=1,\dots, \mu$, and hence
\[
\sum_{i=1}^\mu q_i=\frac{1}{2}n\cdot \mu.
\]
\end{theorem}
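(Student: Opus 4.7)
The plan is to reduce both assertions to a computation with the Poincar\'e series of the Jacobian (Milnor) algebra $M_f:=\CC[x_1,\ldots,x_n]/(\partial f/\partial x_1,\ldots,\partial f/\partial x_n)$, viewed as $\QQ$-graded by placing $x_i$ in weight $w_i$. Since $f$ is non-degenerate weighted homogeneous with an isolated singularity at the origin, the partial derivatives $\partial f/\partial x_i$ are weighted homogeneous of degree $1-w_i$ and form a regular sequence in $\CC[x_1,\ldots,x_n]$; the associated Koszul resolution then yields
\[
\sum_{d}(\dim_\CC M_f^d)\,y^d \;=\; \prod_{i=1}^n \frac{1-y^{1-w_i}}{1-y^{w_i}}.
\]
Specialising at $y=1$ (l'H\^opital applied factorwise) gives $\mu = \prod_i (w_i-1)/w_i = (-1)^n\prod_i(1-1/w_i)$, which is the Milnor number formula.

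For the exponent generating function I would invoke the fundamental result of Steenbrink \cite{st:1}, which asserts that for a non-degenerate weighted homogeneous $f$ the exponents are exactly the rational numbers $q_\alpha:=\sum_{i=1}^n(\alpha_i+1)w_i$, where $x^\alpha$ runs over any monomial basis of $M_f$. This is the geometric core of the statement and rests on the identification of the Hodge filtration on the vanishing cohomology with the natural filtration on the Brieskorn lattice coming from the weighted $\CC^*$-action. Granting this identification,
\[
\sum_{i=1}^\mu y^{q_i}\;=\;\Bigl(\prod_{i=1}^n y^{w_i}\Bigr)\cdot\prod_{i=1}^n\frac{1-y^{1-w_i}}{1-y^{w_i}}\;=\;\prod_{i=1}^n\frac{y^{w_i}-y}{1-y^{w_i}},
\]
and dividing by $y^{n/2}=\prod_i y^{1/2}$, absorbing one $y^{-1/2}$ into each numerator, rewrites the right-hand side as $(-1)^n\prod_i(y^{1/2}-y^{w_i-1/2})/(1-y^{w_i})$, as claimed.

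Finally, the exponent duality $q_i+q_{\mu-i+1}=n$ is a direct corollary: each of the $n$ factors $(y^{1/2}-y^{w_i-1/2})/(1-y^{w_i})$ is sent to its negative under $y\mapsto 1/y$, so the whole generating function $\sum_i y^{q_i-n/2}$ is invariant under this involution; equivalently, the Poincar\'e series of $M_f$ is palindromic around its socle degree $\sum_i(1-2w_i)$. Summing the resulting identity $q_i+q_{\mu-i+1}=n$ over $i$ yields $\sum_i q_i=\tfrac{1}{2}n\mu$. The hard step is the middle one: Steenbrink's identification of the exponents with the weighted-degree data $\sum_i(\alpha_i+1)w_i$ is a genuinely non-trivial Hodge-theoretic input, while the flanking steps are purely manipulations of the Koszul/Poincar\'e series.
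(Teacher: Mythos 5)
Your proposal is correct in substance and follows exactly the route the paper itself indicates: the paper gives no proof of this theorem, citing Steenbrink and remarking only that it follows by computing the Poincar\'e polynomial of $\Omega_f\cong M_f\cdot dx_1\wedge\dots\wedge dx_n$, which is precisely your Koszul computation together with the weight shift by $\sum_i w_i$ coming from the volume form. Two cosmetic sign slips, which cancel out and do not affect the final formulas: the factorwise limit of $(1-y^{1-w_i})/(1-y^{w_i})$ at $y=1$ is $(1-w_i)/w_i=-(1-1/w_i)$, not $(w_i-1)/w_i$ (so the displayed chain $\prod_i(w_i-1)/w_i=(-1)^n\prod_i(1-1/w_i)$ is false as written, though the end result is right); and each factor $(y^{1/2}-y^{w_i-1/2})/(1-y^{w_i})$ is \emph{invariant}, not anti-invariant, under $y\mapsto 1/y$ --- as written, ``each factor is sent to its negative'' would make the product anti-invariant for odd $n$, contradicting the duality you are trying to prove.
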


The following formula was proven by C.~Hertling \cite{H} in the context of Frobenius manifolds and an elementary proof was given by A.~Dimca \cite{D}.

\begin{theorem}[Hertling, Dimca]
Let $q_1\le q_2\le \dots\le q_\mu$ be the exponents of $f$. One has 
\[
\sum_{i=1}^\mu \left(q_i-\frac{n}{2}\right)^2=\frac{1}{12}\hat{c}\cdot \mu, \quad \hat{c}:=n-2\sum_{i=1}^n w_i.
\]
\end{theorem}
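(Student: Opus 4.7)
The plan is to extract the second moment by differentiating the generating identity of Theorem \ref{thm:st} at $y=1$. Substituting $y = e^t$, set
\[
G(t) := \sum_{i=1}^\mu e^{t(q_i - n/2)} = \mu + t\sum_{i=1}^\mu\Bigl(q_i-\tfrac{n}{2}\Bigr) + \frac{t^2}{2}\sum_{i=1}^\mu\Bigl(q_i-\tfrac{n}{2}\Bigr)^2 + O(t^3).
\]
The exponent duality $q_i + q_{\mu-i+1} = n$ kills the linear term, so the target equals twice the coefficient of $t^2$ in $G(t)$. The strategy is therefore to read off this coefficient from the product formula of Theorem \ref{thm:st}.

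For each factor
\[
g_i(t) := \frac{e^{t/2} - e^{t(w_i - 1/2)}}{1 - e^{tw_i}},
\]
both numerator and denominator vanish to first order at $t=0$, so I first cancel the common factor of $t$. A direct Taylor expansion gives
\[
\text{numerator} = t(1-w_i)\Bigl[1 + \tfrac{tw_i}{2} + \tfrac{t^2}{6}\bigl(w_i^2 - \tfrac{w_i}{2} + \tfrac14\bigr) + O(t^3)\Bigr], \qquad
\text{denominator} = -tw_i\Bigl[1 + \tfrac{tw_i}{2} + \tfrac{t^2 w_i^2}{6} + O(t^3)\Bigr].
\]
The crucial observation is that the linear parts of the two bracketed factors coincide; consequently the linear term in $g_i(t)$ cancels and only the $t^2$-difference survives. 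A short computation yields
\[
g_i(t) = -\frac{1 - w_i}{w_i}\left(1 + \frac{t^2(1 - 2w_i)}{24} + O(t^3)\right).
\]

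Multiplying the factors, the constant term of $G(t) = (-1)^n\prod_i g_i(t)$ is $(-1)^n\prod_i\bigl(-(1-w_i)/w_i\bigr) = \prod_i(1-w_i)/w_i$, which by the Milnor number formula in Theorem \ref{thm:st} equals $\mu$. Since each factor has no linear term, the product's $t^2$-coefficient is the sum of individual $t^2$-coefficients times $\mu$:
\[
G(t) = \mu\left(1 + \frac{t^2}{24}\sum_{i=1}^n(1 - 2w_i) + O(t^3)\right) = \mu + \frac{\mu\,\hat c}{24}\,t^2 + O(t^3).
\]
Matching with the exponential expansion of $G(t)$ gives $\sum_{i=1}^\mu(q_i - n/2)^2 = \mu\hat{c}/12$, as required. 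The main obstacle is verifying the cancellation of linear terms in each $g_i(t)$ — a small miracle reflecting the symmetry of $e^{t/2}$ and $e^{t(w_i-1/2)}$ around the midpoint $t w_i/2$ — after which the proof is essentially bookkeeping in the style of Dimca.
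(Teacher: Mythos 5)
Your proof is correct, and it is essentially the approach the paper relies on: the paper cites Hertling and Dimca for this statement, and the $G=\{1\}$ specialization of its own argument for Theorem~\ref{thm:main} (via Proposition~\ref{prop:lim}(i), whose value $\frac{1-2w_i}{12}$ is exactly twice your $t^2$-coefficient $\frac{1-2w_i}{24}$ of each normalized factor) is the same second-logarithmic-derivative computation of the product formula from Theorem~\ref{thm:st}. Your expansions and the cancellation of the linear terms check out.
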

Next, consider the polynomial $f(x_1,x_2, x_3):=x_1^{\alpha_1}+x_2^{\alpha_2}+x_3^{\alpha_3}-x_1x_2x_3$ such that 
$1/\alpha_1+1/\alpha_2+1/\alpha_3<1$.
We have the following properties of the exponents of $f$:
\begin{theorem}[cf. \cite{AGV85}]\label{thm:cuspintro}
The set of exponents $\{q_i\}$ of $f$ is given by 
\begin{multline*}
\left\{1, \frac{1}{\alpha_1}+1,\frac{2}{\alpha_1}+1,\dots,\frac{\alpha_1-1}{\alpha_1}+1,
\frac{1}{\alpha_2}+1,\frac{2}{\alpha_2}+1,\dots,\right.\\
\left.\dots, \frac{\alpha_2-1}{\alpha_2}+1,
\frac{1}{\alpha_3}+1,\frac{2}{\alpha_3}+1,\dots,\frac{\alpha_3-1}{\alpha_3}+1,
2\right\}.
\end{multline*}
In particular, one has
\[
\sum_{i=1}^\mu \left(q_i-\frac{3}{2}\right)^2=\frac{1}{12}\mu+\frac{1}{6}\chi,
\quad \chi:=2+\sum_{i=1}^3\left(\frac{1}{\alpha_i}-1\right).
\]
\end{theorem}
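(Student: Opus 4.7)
The plan has two essentially independent parts.

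\emph{Computation of the exponents.} The polynomial $f=x_1^{\alpha_1}+x_2^{\alpha_2}+x_3^{\alpha_3}-x_1x_2x_3$ with $1/\alpha_1+1/\alpha_2+1/\alpha_3<1$ defines the classical cusp (triangle) singularity $T_{\alpha_1,\alpha_2,\alpha_3}$. The explicit list of exponents is a well-known computation carried out in Arnol'd--Gusein-Zade--Varchenko (reference \cite{AGV85}), obtained from the mixed Hodge structure on the vanishing cohomology. I would simply quote this; in particular one reads off $\mu=\alpha_1+\alpha_2+\alpha_3-1$ (consistent with counting the displayed list) and a symmetry $q_i+q_{\mu-i+1}=3$, so the mean is $3/2$ as expected.

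\emph{Variance computation.} With the exponents explicit, the second statement is a direct algebraic verification. I would split the sum according to the three blocks in the displayed description. The two ``boundary'' exponents $q=1$ and $q=2$ contribute $2\cdot(1/2)^2=1/2$. For the $j$-th block, writing $q_{j,k}=k/\alpha_j+1$ for $k=1,\dots,\alpha_j-1$, the contribution is
\[
\sum_{k=1}^{\alpha_j-1}\left(\frac{k}{\alpha_j}-\frac{1}{2}\right)^{\!2}
=\sum_{k=1}^{\alpha_j-1}\left(\frac{k^2}{\alpha_j^{2}}-\frac{k}{\alpha_j}+\frac{1}{4}\right),
\]
which using the standard formulas for $\sum k$ and $\sum k^{2}$ simplifies to $(\alpha_j-1)(\alpha_j-2)/(12\alpha_j)$, and then further to $\alpha_j/12-1/4+1/(6\alpha_j)$. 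Summing over $j=1,2,3$ and adding the boundary contribution gives
\[
\sum_{i=1}^{\mu}\left(q_i-\frac{3}{2}\right)^{\!2}
=\frac{\alpha_1+\alpha_2+\alpha_3}{12}-\frac{1}{4}+\frac{1}{6}\sum_{j=1}^{3}\frac{1}{\alpha_j}.
\]

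\emph{Matching the right-hand side.} It remains to rewrite this as $\mu/12+\chi/6$. Substituting $\mu=\alpha_1+\alpha_2+\alpha_3-1$ and $\chi=2+\sum_{j}(1/\alpha_j-1)=\sum_j 1/\alpha_j-1$ yields precisely the expression above. This is a one-line check after the sum computation.

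The only nontrivial input is the explicit spectrum in the first part, which is taken from \cite{AGV85}; the variance identity is then a purely mechanical sum. I do not foresee an obstacle beyond bookkeeping, and indeed the result should be seen as the singularity-theoretic analogue of Theorem \ref{thm:lwb} for a compact surface with $c_1$ proportional to the class encoded in $\chi$, explaining the $\chi/6$ correction to the Hertling--Dimca formula.
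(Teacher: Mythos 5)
Your proposal is correct and matches the paper's (implicit) treatment: the paper gives no written proof of this theorem, simply citing \cite{AGV85} for the explicit spectrum of the cusp singularity $T_{\alpha_1,\alpha_2,\alpha_3}$ and leaving the variance identity as an elementary verification, which is exactly what you carry out. Your block-by-block sum, the value $(\alpha_j-1)(\alpha_j-2)/(12\alpha_j)$ per block, and the final matching with $\mu=\alpha_1+\alpha_2+\alpha_3-1$ and $\chi=\sum_j 1/\alpha_j-1$ all check out.
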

The purpose of this paper is to generalize these results  to pairs 
$(f,G)$, where $G \subset {\rm SL}_n(\CC)$ is a finite abelian subgroup leaving $f$ invariant. If $f$ is weighted homogeneous, such a pair is also called an {\em orbifold Landau-Ginzburg model} because $f$ is the potential of such a model. Our main theorem in this paper is Theorem~\ref{thm:main}.
The generalization of  Theorem~\ref{thm:cuspintro} is given as Theorem~\ref{thm:cusp}.
The similarity between smooth compact K\"ahler manifolds and isolated hypersurface singularities with 
a group action is not an accident but a matter of course.
Mirror symmetry predicts a correspondence between Landau-Ginzburg models and (non-commutative) Calabi-Yau orbifolds.
For example, a mirror partner of a weighted homogeneous polynomial with a group action is 
a fractional Calabi--Yau manifold of dimension $\hat{c}$, which has lead us to the statement of  Theorem~\ref{thm:main}.
\smallskip

\begin{sloppypar}

{\bf Acknowledgements}.\  
This work has been supported 
by the DFG-programme SPP1388 ''Representation Theory'' (Eb 102/6-1).
The second named author is also supported 
by JSPS KAKENHI Grant Number 24684005. 
We thank the anonymous referee for carefully reading our paper and for most valuable comments.
\end{sloppypar}
%%%%%%%%%%%%%%%%%%%%%%%%%%%%%%%%%%%%%%%%%%%%%%%%%%%%%%%%%%%%%%%%
\section{Basic properties of E-functions}
Let $G$ be a finite abelian subgroup of ${\rm SL}_n(\CC)$ acting diagonally on $\CC^n$.
For $g \in G$, we denote by ${\rm Fix}\, g :=\{x\in\CC^n~|~g\cdot x=x \}$ the fixed locus of $g$  
and by $n_g: = \dim {\rm Fix}\, g$ its dimension.

We first introduce the notion of the age of an element of a finite group as follows: 
\begin{definition}[\cite{IR}]
Let $g \in G$ be an element and $r$ be the order of $g$. Then $g$ has a unique expression of the following form  
\[ g={\rm diag}({\bf e}[a_1/r], \ldots, {\bf e}[a_n/r]) \quad \mbox{with } 0 \leq a_i < r, \]
where ${\bf e}[ - ] = e^{2 \pi \sqrt{-1} \cdot  -}$.
Such an element $g$ is often simply denoted by $g=\frac{1}{r}(a_1, \ldots, a_n)$. The {\em age} of $g$ is defined as 
\[ {\rm age}(g) := \frac{1}{r}\sum_{i=1}^n a_i. \] 
Since we assume that $G \subset {\rm SL}_n(\CC)$, the ${\rm age}(g)$ is a non-negative integer for all $g\in G$. 
\end{definition}
\begin{definition}
An element $g\in G$ of age 1 with ${\rm Fix}\, g =\{0\}$ is called a {\em junior element}. 
The number of junior elements is denoted by $j_G$.
\end{definition}
Let $f=f(x_1,\dots, x_n)$ be a polynomial with an isolated singularity at the origin 
which is invariant under the natural action of $G$. 
For $g \in G$, set $f^g:=f|_{{\rm Fix}\, g}$. 
\begin{proposition}
The function $f^g$ has an isolated singularity at the origin.
\end{proposition}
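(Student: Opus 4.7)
The plan is to compute the critical locus of $f^g$ directly, using the $G$-invariance of $f$ together with the fact that ${\rm Fix}\, g$ is a coordinate subspace of $\CC^n$ (which is a consequence of $g$ acting diagonally). After relabeling coordinates I may assume that $g={\rm diag}(\zeta_1,\ldots,\zeta_n)$ with $\zeta_i=1$ for $1\le i\le n_g$ and $\zeta_i\ne 1$ for $i>n_g$, so that ${\rm Fix}\, g=\{x\in\CC^n\mid x_{n_g+1}=\cdots=x_n=0\}$ and $f^g$ is just the polynomial $f(x_1,\ldots,x_{n_g},0,\ldots,0)$.

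The key observation comes from differentiating the invariance identity $f(g\cdot x)=f(x)$ with respect to $x_i$: one obtains $\zeta_i\cdot \frac{\partial f}{\partial x_i}(g\cdot x)=\frac{\partial f}{\partial x_i}(x)$. Restricting to points $x\in{\rm Fix}\, g$, where $g\cdot x=x$, this gives $(1-\zeta_i^{-1})\cdot \frac{\partial f}{\partial x_i}(x)=0$ for every $i$; hence $\frac{\partial f}{\partial x_i}$ vanishes identically on ${\rm Fix}\, g$ whenever $\zeta_i\ne 1$.

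With this in hand, the critical locus of $f^g$ inside ${\rm Fix}\, g$ is cut out by the equations $\frac{\partial f^g}{\partial x_i}=\frac{\partial f}{\partial x_i}\big|_{{\rm Fix}\, g}=0$ for $i=1,\ldots,n_g$. Combining these with the automatic vanishing of the remaining partial derivatives on ${\rm Fix}\, g$ established above, one sees that the critical locus of $f^g$ coincides with ${\rm Fix}\, g\cap\{x\in\CC^n\mid \frac{\partial f}{\partial x_1}(x)=\cdots=\frac{\partial f}{\partial x_n}(x)=0\}$. Since $f$ has an isolated singularity at the origin, this intersection reduces to $\{0\}$, and therefore so does the critical locus of $f^g$.

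The only step with any content is the automatic vanishing of $\frac{\partial f}{\partial x_i}\big|_{{\rm Fix}\, g}$ for those $i$ with $\zeta_i\ne 1$, and this follows in one line from the invariance identity. I do not anticipate any serious obstacle.
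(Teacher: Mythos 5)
Your proof is correct and rests on the same key observation as the paper's: differentiating the invariance $f(g\cdot x)=f(x)$ shows that $\partial f/\partial x_i$ vanishes on ${\rm Fix}\, g$ for every coordinate moved by $g$, which the paper states as the ideal inclusion $\left(\frac{\partial f}{\partial x_{n_g+1}},\dots,\frac{\partial f}{\partial x_n}\right)\subset(x_{n_g+1},\dots,x_n)$. The only difference is cosmetic: the paper concludes by bounding the dimension of the Milnor algebra of $f^g$ by that of $f$, while you conclude set-theoretically by identifying the critical locus of $f^g$ with ${\rm Fix}\, g$ intersected with the critical locus of $f$; the two formulations are equivalent via the Nullstellensatz.
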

\begin{proof}
Since $G$ acts diagonally on $\CC^n$, we may assume that ${\rm Fix}\, g=\{x_{n_g+1}=\dots =x_{n}=0\}$ 
by a suitable renumbering of indices.
Since $f$ is invariant under $G$, $g\cdot x_i\ne x_i$ for $i=n_g+1,\dots, n$ and 
$\frac{\partial f}{\partial x_{n_g+1}},\dots, \frac{\partial f}{\partial x_n}$ form a regular sequence, we have
\[
\left(\frac{\partial f}{\partial x_{n_g+1}},\dots, \frac{\partial f}{\partial x_n}\right)\subset 
\left(x_{n_g+1},\dots, x_n\right).
\]
Therefore, we have
\begin{align*}
&\dim_\CC\CC[x_1,\dots ,x_{n_g}]\left/\left(\frac{\partial f^g}{\partial x_1},\dots, \frac{\partial f^g}{\partial x_{n_g}}\right)\right.\\
= &\dim_\CC\CC[x_1,\dots ,x_n]\left/\left(\frac{\partial f}{\partial x_1},\dots, \frac{\partial f}{\partial x_{n_g}},
x_{n_g+1},\dots, x_n\right)\right.\\
\le &\dim_\CC\CC[x_1,\dots ,x_n]\left/\left(\frac{\partial f}{\partial x_1},\dots, \frac{\partial f}{\partial x_n}\right)\right.<\infty.
\end{align*}
\end{proof}
We first associate to $f$ a natural mixed Hodge structure with an automorphism,
which gives the following bi-graded vector space: 
\begin{definition}
Define the bi-graded vector space $\H_f:=\displaystyle\bigoplus_{p,q\in\QQ}\H^{p,q}_f$ as
\begin{enumerate}
\item If $p+q\ne n$, then  $\H^{p,q}_f:=0$.
\item If $p+q=n$ and $p\in\ZZ$, then  
\[
\H^{p,q}_f:={\rm Gr}^{p}_{F^\bullet}H^{n-1}(Y_f,\CC)_1.
\]
\item If $p+q=n$ and $p\notin\ZZ$, then  
\[
\H^{p,q}_f:={\rm Gr}^{[p]}_{F^\bullet}H^{n-1}(Y_f,\CC)_{e^{2\pi\sqrt{-1} p}},
\]
where $[p]$ is the largest integer less than $p$.
\end{enumerate}
\end{definition}
As a vector space, $\H_f$ is identified with 
$\Omega_f:=\Omega_{\CC^n,0}^{n}\left/df\wedge \Omega_{\CC^n,0}^{n-1}\right. $.
Note that we have
\[
\Omega_f
= \O_{\CC^n,0}\left/\left(\frac{\partial f}{\partial x_1},\dots, \frac{\partial f}{\partial x_n}\right)\right. 
\cdot dx_1\wedge \dots \wedge dx_n,
\]
and that Theorem~\ref{thm:st} can be shown based on the above equality by calculating the Poincar\'{e} polynomial 
of the right hand side. 
The $G$-action on $\H_f$ can also be identified with the one on $\Omega_f$.
We shall use the fact that $\H_{f^g}$ admits a natural $G$-action 
by restricting the $G$-action on $\CC^n$ to ${\rm Fix}\, g$ (which is well-defined since $G$ acts diagonally on $\CC^n$).
To the pair $(f,G)$ we can associate a natural mixed Hodge structure with an automorphism,
which gives the following bi-graded vector space:
\begin{definition}
Define the bi-graded $\CC$-vector space $\H_{f,G}$ as 
\begin{equation}
\H_{f,G}:=\bigoplus_{g\in G}(\H_{f^g})^G(-{\rm age}(g),-{\rm age}(g)),
\end{equation}
where $(\H_{f^g})^G$ denotes the $G$-invariant subspace of $\H_{f^g}$.
\end{definition}
Since the bi-graded vector space $\H_{f,G}$ is the analog of $\displaystyle\bigoplus_{p,q\in\ZZ}H^q(X,\Omega_X^p)$ for a smooth compact K\"{a}hler 
manifold $X$, we introduce the following notion:
\begin{definition}
The {\em Hodge numbers} for the pair $(f,G)$ are  
\[
h^{p,q}(f,G):= \dim_\CC  \H^{p,q}_{f,G},\quad p,q\in\QQ.
\]
\end{definition}
\begin{definition}
The rational number $q$ with $\H^{p,q}_{f,G}\ne 0$ is called an {\em exponent} of the pair $(f,G)$. 
The {\em set of exponents} of the pair $(f,G)$ is the multi-set of exponents 
\[
\left\{q*h^{p,q}(f,G)~|~p,q\in\QQ,\ h^{p,q}(f,G)\ne 0 \right\},
\]
where by $u*v$ we denote $v$ copies of the rational number $u$.  
\end{definition}
Note that $p+q\in\ZZ$ for the rational number $q$ with $h^{p,q}({f,G})\ne 0$ since $G \subset {\rm SL}_n(\CC)$.
\begin{definition} The E-function for the pair $(f,G)$ is
\begin{equation}
E(f,G)(t,\bar{t}):=\sum_{p,q\in\QQ}(-1)^{(p-n)+q}h^{p,q}(f,G) \cdot 
t^{p-\frac{n}{2}}\bar{t}^{q-\frac{n}{2}}. 
\end{equation}
\end{definition}
\begin{definition}
The {\em Milnor number} for the pair $(f,G)$ is  
\[
\mu_{(f,G)}:=E(f,G)(1,1)= \sum_{p,q\in\QQ}(-1)^{(p-n)+q}h^{p,q}(f,G).
\]
\end{definition}
\begin{theorem} \label{thm:EGqh}
Assume that $f$ is a non-degenerate weighted homogeneous polynomial. 
Write $g \in G$ in the form 
$(\lambda_1(g), \ldots, \lambda_n(g))$ where $\lambda_i(g)={\bf e}[a_iw_i]$. The E-function for the pair $(f,G)$ is given by the following formula$:$
\begin{equation}
E(f,G)(t,\bar{t})=\sum_{g\in G}E_g(f,G)
(t,\bar{t}),
\end{equation}
\begin{equation}
E_g(f,G)(t,\bar{t})
:= (-1)^n \left(\prod_{a_iw_i \not\in \ZZ}\left({t}{\bar{t}}\right)^{w_ia_i-[w_ia_i]-\frac{1}{2}} \right)
\cdot \frac{1}{|G|} \sum_{h\in G}\prod_{a_iw_i\in\ZZ}
\frac{\left(\frac{\bar{t}}{t}\right)^{\frac{1}{2}}-\lambda_i(h)\left(\frac{\bar{t}}{t}\right)^{w_i-\frac{1}{2}}}
{1-\lambda_i(h)\left(\frac{\bar{t}}{t}\right)^{w_i}}.
\end{equation}
Here $[a]$ for $a \in \QQ$ denotes the largest integer less than or equal to $a$.
\end{theorem}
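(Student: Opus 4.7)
The plan is to prove the formula sector-by-sector. Since the E-function is additive in bi-graded vector spaces and
$$\H_{f,G}=\bigoplus_{g\in G}(\H_{f^g})^G(-\mathrm{age}(g),-\mathrm{age}(g)),$$
one immediately gets $E(f,G)(t,\bar t)=\sum_{g\in G}E_g(f,G)(t,\bar t)$, where $E_g(f,G)$ denotes the contribution of the $g$-th summand; I will match each $E_g(f,G)$ with the claimed expression. Fix $g\in G$. Because $G$ acts diagonally, ${\rm Fix}\,g=\{x_i=0:a_iw_i\notin\ZZ\}$, and $f^g$ is weighted homogeneous in the $n_g=|\{i:a_iw_i\in\ZZ\}|$ variables $\{x_i:a_iw_i\in\ZZ\}$ with the original weights. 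Thus $\H_{f^g}$ is pure of weight $n_g$, and as $G$-module $\H_{f^g}\cong\Omega_{f^g}=\mathrm{Jac}(f^g)\cdot\omega_g$ with $\omega_g:=\bigwedge_{a_iw_i\in\ZZ}dx_i$.

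The central step is to compute the graded $G$-character $\chi_h(\Omega_{f^g})(y)$ via the $G$-equivariant Koszul resolution of $\mathrm{Jac}(f^g)$ by the regular sequence $(\partial f^g/\partial x_i)_{a_iw_i\in\ZZ}$. With $x_i$ of weight $\lambda_i(h)$ and degree $w_i$, with $\partial f^g/\partial x_i$ of weight $\lambda_i(h)^{-1}$ and degree $1-w_i$ (since $f^g$ is $G$-invariant), and with $\omega_g$ of weight $\prod_{a_iw_i\in\ZZ}\lambda_i(h)$ and degree $\sum_{a_iw_i\in\ZZ}w_i$, the alternating-sum Koszul character yields
$$\chi_h(\Omega_{f^g})(y)=y^{\sum_{a_iw_i\in\ZZ}w_i}\,\prod_{a_iw_i\in\ZZ}\lambda_i(h)\cdot\prod_{a_iw_i\in\ZZ}\frac{1-\lambda_i(h)^{-1}y^{1-w_i}}{1-\lambda_i(h)y^{w_i}}.$$
Applying the identity $1-\lambda y^{w-1}=-\lambda y^{w-1}(1-\lambda^{-1}y^{1-w})$ to each of the $n_g$ factors absorbs the $\omega_g$-weight and the $y$-shift into the sign, producing
$$y^{-n_g/2}\chi_h(\Omega_{f^g})(y)=(-1)^{n_g}\prod_{a_iw_i\in\ZZ}\frac{y^{1/2}-\lambda_i(h)y^{w_i-1/2}}{1-\lambda_i(h)y^{w_i}}.$$
Averaging over $h\in G$ produces the Poincar\'e polynomial of $(\H_{f^g})^G$ in the centered grading $y^{q'-n_g/2}$.

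The final step is the conversion from the $y$-grading back to the $(t,\bar t)$-bi-grading. Since $\H_{f^g}$ is pure of weight $n_g$, for $p'+q'=n_g$ one has $t^{p'-n/2}\bar t^{q'-n/2}=(t\bar t)^{(n_g-n)/2}(\bar t/t)^{q'-n_g/2}$; the Tate twist by $(-\mathrm{age}(g),-\mathrm{age}(g))$ multiplies these monomials by $(t\bar t)^{\mathrm{age}(g)}$, and the sign $(-1)^{(p-n)+q}$ evaluated on the shifted bi-grade $(p'+\mathrm{age}(g),q'+\mathrm{age}(g))$ becomes $(-1)^{n_g-n}$. Substituting $y=\bar t/t$, combining all factors, and using the identity
$$\mathrm{age}(g)+\frac{n_g-n}{2}=\sum_{a_iw_i\notin\ZZ}\Bigl(w_ia_i-[w_ia_i]-\tfrac{1}{2}\Bigr)$$
together with $(-1)^{(n_g-n)+n_g}=(-1)^n$ yields precisely the claimed formula for $E_g(f,G)$.

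The main obstacle is the algebraic manipulation in the middle paragraph. On the fixed locus the top-form weight $\prod_{a_iw_i\in\ZZ}\lambda_i(h)$ is in general nontrivial---only the $\mathrm{SL}_n$ condition $\prod_{i=1}^n\lambda_i(h)=1$ on the full $\CC^n$ applies---and it is a nontrivial cancellation that this weight combines exactly with the Koszul sign $(-1)^{n_g}$ and the degree shift $y^{-n_g/2}$ to reproduce the symmetric expression $(y^{1/2}-\lambda_i(h)y^{w_i-1/2})/(1-\lambda_i(h)y^{w_i})$ that appears in the statement.
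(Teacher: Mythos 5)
Your proposal is correct and follows essentially the same route as the paper's (very terse) proof: decompose $E(f,G)$ over the twisted sectors $g\in G$, compute the $G$-equivariant version of Steenbrink's formula (Theorem~\ref{thm:st}) for each $f^g$ and average over $h\in G$ to extract the invariant part, and account for the age shift via $(t\bar t)^{{\rm age}(g)+(n_g-n)/2}=\prod_{a_iw_i\notin\ZZ}(t\bar t)^{a_iw_i-[a_iw_i]-\frac12}$. The equivariant Koszul computation you carry out is exactly the detail the paper leaves implicit in citing Theorem~\ref{thm:st}, and your sign and grading bookkeeping checks out.
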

\begin{proof}
Theorem~\ref{thm:st} enables us to obtain $E_g(f,G)(t,\bar{t})$.
In particular, the term  
\[
\frac{1}{|G|} \sum_{h\in G}(-1)^{n_g}\prod_{a_iw_i\in\ZZ}
\frac{\left(\frac{\bar{t}}{t}\right)^{\frac{1}{2}}-\lambda_i(h)\left(\frac{\bar{t}}{t}\right)^{w_i-\frac{1}{2}}}
{1-\lambda_i(h)\left(\frac{\bar{t}}{t}\right)^{w_i}}
\]
calculates the $G$-invariant part of $E(f^g,\{1\})(t,\bar{t})$ and the term $\displaystyle\prod_{w_ia_i \not\in \ZZ} (-1) \left({t}{\bar{t}}\right)^{w_ia_i-[w_ia_i]-\frac{1}{2}}$ gives the contribution from the age shift $(-{\rm age}(g),-{\rm age}(g))$.
\end{proof}
We have the following properties of the Hodge numbers $h^{p,q}(f,G)$.
\begin{corollary}
Assume that $f$ is a non-degenerate weighted homogeneous polynomial. 
We have 
\[
h^{p,q}(f,G)=h^{q,p}(f,G),\quad p,q\in\QQ.
\]
In other words, we have 
\[
E(f,G)(t,\bar{t})=E(f,G)(\bar{t},t).
\]
\end{corollary}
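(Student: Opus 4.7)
The plan is to show the stronger claim that each summand $E_g(f,G)(t,\bar t)$ in Theorem~\ref{thm:EGqh} is already invariant under $t\leftrightarrow \bar t$. The corollary then follows by summing over $g\in G$.

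First I would observe that the age-shift prefactor
\[
\prod_{a_iw_i\notin\ZZ}(t\bar t)^{w_ia_i-[w_ia_i]-\frac{1}{2}}
\]
depends only on the product $t\bar t$, so it is manifestly symmetric in $t$ and $\bar t$. The nontrivial part of the symmetry therefore concerns only the inner sum. Introducing the variable $y:=\bar t/t$, the map $t\leftrightarrow\bar t$ corresponds to $y\mapsto y^{-1}$, and the proof reduces to showing that
\[
\frac{1}{|G|}\sum_{h\in G}\prod_{a_iw_i\in\ZZ}
\frac{y^{\frac{1}{2}}-\lambda_i(h)\,y^{w_i-\frac{1}{2}}}{1-\lambda_i(h)\,y^{w_i}}
\]
is invariant under $y\mapsto y^{-1}$.

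The key computation is factor by factor. Under $y\mapsto y^{-1}$, a single factor becomes
\[
\frac{y^{-\frac{1}{2}}-\lambda_i(h)\,y^{-w_i+\frac{1}{2}}}{1-\lambda_i(h)\,y^{-w_i}},
\]
and multiplying numerator and denominator by $-\lambda_i(h)^{-1}y^{w_i}$ yields
\[
\frac{y^{\frac{1}{2}}-\lambda_i(h)^{-1}\,y^{w_i-\frac{1}{2}}}{1-\lambda_i(h)^{-1}\,y^{w_i}}.
\]
In other words, the only effect of $y\mapsto y^{-1}$ is to replace $\lambda_i(h)$ by $\lambda_i(h)^{-1}=\lambda_i(h^{-1})$ throughout the product. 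Since this holds simultaneously for every $i$ in the product, the $h$-th summand is transformed into the $h^{-1}$-th summand.

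Finally, since $h\mapsto h^{-1}$ is a bijection of the finite group $G$, summing over $h\in G$ is invariant under this relabeling. Hence $E_g(f,G)(t,\bar t)=E_g(f,G)(\bar t,t)$ for each $g$, and the corollary follows. The only conceptual step is the substitution $y\mapsto y^{-1}$ in a single factor, which is purely mechanical; no real obstacle is anticipated.
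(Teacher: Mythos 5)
Your argument is correct and is precisely the ``elementary direct calculation'' that the paper's proof alludes to without writing out: the prefactor depends only on $t\bar t$, the substitution $y\mapsto y^{-1}$ in each factor amounts to $\lambda_i(h)\mapsto\lambda_i(h)^{-1}=\lambda_i(h^{-1})$, and the bijection $h\mapsto h^{-1}$ of $G$ finishes the proof. No issues.
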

\begin{proof}
This is shown by an elementary direct calculation.
\end{proof}
\begin{corollary}
Assume that $f$ is a non-degenerate weighted homogeneous polynomial. 
The Hodge numbers satisfy the ``Serre duality" 
\[
h^{p,q}(f,G)=h^{n-p,n-q}(f,G),\quad p,q\in\QQ.
\]
In other words, we have 
\[
E(f,G)(t,\bar{t})=E(f,G)(t^{-1},\bar{t}^{-1}).
\]
\end{corollary}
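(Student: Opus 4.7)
The plan is to verify $E(f,G)(t,\bar{t}) = E(f,G)(t^{-1}, \bar{t}^{-1})$ directly from the closed-form expression of Theorem~\ref{thm:EGqh}. The key claim is that $E_g(f,G)(t^{-1}, \bar{t}^{-1}) = E_{g^{-1}}(f,G)(t, \bar{t})$ for every $g \in G$; the corollary then follows by summing over $g$, since $g \mapsto g^{-1}$ is an involution of $G$.

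I would factor $E_g(f,G)(t,\bar{t})$ into the sign $(-1)^n$, the age-shift prefactor
\[
P_g(t,\bar{t}) := \prod_{a_iw_i \notin \ZZ}(t\bar{t})^{w_ia_i - [w_ia_i] - 1/2},
\]
and the $h$-sum
\[
S_g(u) := \frac{1}{|G|}\sum_{h \in G}\prod_{a_iw_i \in \ZZ}\frac{u^{1/2} - \lambda_i(h)u^{w_i - 1/2}}{1 - \lambda_i(h)u^{w_i}}, \qquad u := \bar{t}/t.
\]
Both factors depend on $g$ only through the partition of $\{1,\dots,n\}$ into indices with $a_iw_i \in \ZZ$ and its complement, and this partition is manifestly invariant under $g \mapsto g^{-1}$ because $-a_iw_i \in \ZZ$ iff $a_iw_i \in \ZZ$.

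For the prefactor, the substitution $(t,\bar{t}) \mapsto (t^{-1},\bar{t}^{-1})$ negates each exponent; combining this with the identity $(-x) - [-x] = 1 - (x - [x])$ valid for $x \notin \ZZ$ shows that $-(w_ia_i - [w_ia_i] - 1/2)$ is exactly the exponent associated to $g^{-1}$, so $P_g(t^{-1},\bar{t}^{-1}) = P_{g^{-1}}(t,\bar{t})$. For the $h$-sum, the substitution sends $u$ to $u^{-1}$; multiplying numerator and denominator of each factor by $u^{w_i}$ and then by $-\lambda_i(h)^{-1}$ yields
\[
\frac{u^{-1/2} - \lambda_i(h)u^{1/2 - w_i}}{1 - \lambda_i(h)u^{-w_i}} \;=\; \frac{u^{1/2} - \lambda_i(h)^{-1}u^{w_i-1/2}}{1 - \lambda_i(h)^{-1}u^{w_i}}.
\]
Reindexing the sum by $h \mapsto h^{-1}$ gives $S_g(u^{-1}) = S_g(u)$, and since the product's index set agrees with that for $g^{-1}$ this equals $S_{g^{-1}}(u)$. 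Assembling the three factors yields $E_g(f,G)(t^{-1},\bar{t}^{-1}) = E_{g^{-1}}(f,G)(t,\bar{t})$, and summing over $g$ closes the argument.

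The whole proof is elementary bookkeeping with the explicit formula. The only points requiring care are the invariance of the index partition under inversion and the sign conventions produced when clearing $u^{w_i}$ and $\lambda_i(h)$ from the denominator; the underlying conceptual input is the pair of parity identities $\{-x\} = 1 - \{x\}$ and the symmetry of $G$ under $h \mapsto h^{-1}$.
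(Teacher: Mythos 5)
Your proposal is correct and follows essentially the same route as the paper: the paper's proof consists of citing the identity $w_i(-a_i)-[w_i(-a_i)]-\tfrac{1}{2}=-w_ia_i+[w_ia_i]+\tfrac{1}{2}$ and leaving the rest as "an easy calculation," which is precisely the pairing $g\leftrightarrow g^{-1}$ (together with $h\leftrightarrow h^{-1}$ in the inner sum) that you carry out explicitly. Your write-up simply supplies the details the paper omits; the computations check out.
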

\begin{proof}
By using the formula 
\[
w_i(-a_i)-[w_i(-a_i)]-\frac{1}{2}=-w_i a_i+[w_i a_i]+\frac{1}{2},
\]
an easy calculation yields the formula.
\end{proof}
\begin{corollary}
Assume that $f$ is a non-degenerate weighted homogeneous polynomial. 
The mean of the set of exponents of $(f,G)$ is $n/2$.
Namely, we have 
\[
\sum_{p,q\in\QQ}(-1)^{(p-n)+q} \left(q-\frac{n}{2} \right)h^{p,q}(f,G)=0.
\]
\end{corollary}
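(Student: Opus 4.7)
The plan is to deduce this corollary as a direct consequence of the Serre duality
\[
E(f,G)(t,\bar t)=E(f,G)(t^{-1},\bar t^{-1})
\]
established in the preceding corollary. The first step is to recognize that the sum in the statement is exactly the value of $\partial_{\bar t}E(f,G)(t,\bar t)$ at $t=\bar t=1$. Indeed, term-by-term differentiation of the defining expansion of the E-function gives
\[
\partial_{\bar t}E(f,G)(t,\bar t)\Big|_{t=\bar t=1}=\sum_{p,q\in\QQ}(-1)^{(p-n)+q}\left(q-\frac{n}{2}\right)h^{p,q}(f,G),
\]
so the claim reduces to showing that this partial derivative vanishes at $(1,1)$.

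Next, I would differentiate both sides of the Serre duality identity with respect to $\bar t$. By the chain rule, the substitution $\bar t\mapsto \bar t^{-1}$ on the right-hand side contributes a factor $-\bar t^{-2}$, so that
\[
\partial_{\bar t}E(f,G)(t,\bar t)=-\bar t^{-2}\cdot\bigl(\partial_{2}E(f,G)\bigr)(t^{-1},\bar t^{-1}),
\]
where $\partial_{2}$ denotes the partial derivative with respect to the second argument. Specializing to $t=\bar t=1$ yields
\[
\partial_{\bar t}E(f,G)(1,1)=-\partial_{\bar t}E(f,G)(1,1),
\]
which forces this value to be zero and proves the corollary.

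No serious obstacle arises; the argument is the standard antisymmetrization trick applied to an involutive symmetry, and all the substantive work has already been carried out in establishing the Serre duality corollary. An alternative route would be to compute $\partial_{\bar t}$ of each factor in the product formula of Theorem~\ref{thm:EGqh} and to sum over $g\in G$ directly, but this would be considerably more laborious and would obscure the conceptual role of Serre duality, which is precisely what makes the mean of the exponents equal to $n/2$.
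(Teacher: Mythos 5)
Your argument is correct and follows the paper's own route: the paper likewise deduces the vanishing directly from the Serre-duality corollary $E(f,G)(t,\bar t)=E(f,G)(t^{-1},\bar t^{-1})$, the only difference being that you phrase the symmetrization as differentiation of the functional equation rather than as the substitution $(p,q)\mapsto(n-p,n-q)$ in the sum. Both are the same one-line observation, so nothing is missing.
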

\begin{proof}
This is obvious from the previous corollary.
\end{proof}
\begin{definition}
Define the {\em variance of the set of exponents} of $(f,G)$ by 
\[ {\rm Var}_{(f,G)} := \sum_{p,q\in\QQ}(-1)^{(p-n)+q} \left(q- \frac{n}{2} \right)^2 h^{p,q}(f,G). \]
\end{definition}
In order to state our formula for the variance, we introduce the following notion of dimension for a polynomial $f$ with 
an isolated singularity at the origin.
\begin{definition}
The non-negative rational number $\hat{c}$ defined as the difference of the maximal exponent of the pair $(f,\{1\})$ 
and the minimal exponent of the pair $(f,\{1\})$ is called the {\em dimension} of $f$.
\end{definition}
\begin{proposition}
Assume that $f$ is a non-degenerate weighted homogeneous polynomial. 
The dimension $\hat{c}$ of $f$ is given by 
\[
\hat{c} := n - 2\sum_{i=1}^n w_i.
\]
\end{proposition}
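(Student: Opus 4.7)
The plan is to reduce the claim to identifying the smallest exponent of $(f,\{1\})$, and then to read this off from the generating function in Theorem~\ref{thm:st}. Writing the exponents of $(f,\{1\})$ as $q_1\leq\dots\leq q_\mu$, the duality $q_i+q_{\mu-i+1}=n$ stated in Theorem~\ref{thm:st} gives $q_\mu+q_1=n$, so
\[
\hat{c}=q_\mu-q_1=n-2q_1,
\]
and it suffices to prove $q_1=\sum_{i=1}^n w_i$.

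To identify $q_1$, I would compare the lowest powers of $y$ on the two sides of
\[
\sum_{i=1}^\mu y^{q_i-\frac{n}{2}}=(-1)^n\prod_{i=1}^n\frac{y^{\frac{1}{2}}-y^{w_i-\frac{1}{2}}}{1-y^{w_i}}.
\]
The smallest exponent on the left is $q_1-\frac{n}{2}$. For the right, one first observes that $w_i<1$ for every $i$; otherwise $\partial f/\partial x_i$ would be weighted homogeneous of weight $\leq 0$, hence a constant, and the vanishing of that constant at the origin would force $f$ to be independent of $x_i$, contradicting the isolated critical point assumption. Expanding $(1-y^{w_i})^{-1}$ as a geometric series in $y$, each factor becomes
\[
\frac{y^{\frac{1}{2}}-y^{w_i-\frac{1}{2}}}{1-y^{w_i}}=\sum_{k\geq 0}\left(y^{\frac{1}{2}+k w_i}-y^{w_i-\frac{1}{2}+k w_i}\right),
\]
whose unique monomial of smallest exponent is $-y^{w_i-\frac{1}{2}}$, thanks to the strict inequality $w_i-\frac{1}{2}<\frac{1}{2}$.

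Multiplying these $n$ expansions, the contribution of smallest exponent on the right is
$(-1)^n\prod_{i=1}^n(-y^{w_i-\frac{1}{2}})=y^{\sum_{i=1}^n w_i-\frac{n}{2}}$, and this term cannot be cancelled by any other product of summands since it is the strict minimum in each factor. Comparing with the left-hand side gives $q_1=\sum_{i=1}^n w_i$, whence
\[
\hat{c}=n-2\sum_{i=1}^n w_i.
\]
The one slightly delicate point is the justification that $w_i<1$ for every $i$; once that inequality is in place, the remainder is a routine lowest-order comparison of Laurent expansions.
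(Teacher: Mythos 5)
Your proof is correct and follows essentially the same route as the paper, which simply asserts that the minimal and maximal exponents $\sum_{i=1}^n w_i$ and $n-\sum_{i=1}^n w_i$ follow from Theorem~\ref{thm:st}; you supply the details the paper omits (the duality reduction to $q_1$, the lowest-order comparison of the generating function, and the justification that $w_i<1$), all of which check out.
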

\begin{proof}
It easily follows from Theorem~\ref{thm:st} that the maximal exponent and the minimal exponent are 
given by $n-\displaystyle\sum_{i=1}^n w_i$ and $\displaystyle\sum_{i=1}^n w_i$ respectively.
\end{proof}
It is natural from the mirror symmetry point of view to expect that
the variance of the set of exponents of $(f,G)$ should be given by 
\begin{equation}
{\rm Var}_{(f,G)} = \frac{1}{12} \hat{c} \cdot\mu_{(f,G)}.
\end{equation}
This will be proved in the next section.
%%%%%%%%%%%%%%%%%%%%%%%%%%%%%%%%%%%%%%%%%%%%%%%%%%%%%%%%%%%%%%%%
\section{Variance of the exponents}
\begin{definition}
The {\em $\chi_y$-genus} for the pair $(f,G)$ is  
\[ \chi(f,G)(y):=E(f,G)(1,y).
\]
We have 
\[ \chi(f,G)(y) = (-1)^n \sum_{g \in G}\left( y^{{\rm age}(g) - \frac{n-n_g}{2}}
\cdot \frac{1}{|G|} \sum_{h\in G}\prod_{\lambda_i(g) = 1}
\frac{y^{\frac{1}{2}}-\lambda_i(h)y^{w_i-\frac{1}{2}}}
{1-\lambda_i(h)y^{w_i}} \right) .
\]
\end{definition}
One has 
\begin{eqnarray*}
\mu_{(f,G)} & = &\lim_{y \to 1} \chi(f,G)(y), \\
{\rm Var}_{(f,G)} & = & \lim_{y \to 1} \frac{d}{dy}\left(y\frac{d}{dy}\chi(f,G)(y)\right).
\end{eqnarray*}

\begin{proposition} \label{prop:lim}
Let 
\[ p_i(y) := \frac{y^{\frac{1}{2}}-\lambda_i(h)y^{w_i-\frac{1}{2}}}{1-\lambda_i(h)y^{w_i}}. \]
{\rm (i)} For $\lambda_i(h)=1$ one has
\[ \lim_{y \to 1} p_i(y)
= 1 - \frac{1}{w_i}, \quad  \lim_{y \to 1}  \frac{\frac{d}{dy} p_i(y)}{p_i(y)} = 0, \quad
\lim_{y \to 1} \frac{d}{dy}\left( y \frac{\frac{d}{dy} p_i(y)}{p_i(y)} \right) = \frac{1-2w_i}{12}. 
\]
{\rm (ii)} For $\lambda_i(h) \neq 1$ one has
\[ \lim_{y \to 1} p_i(y) = 1, 
\quad  \lim_{y \to 1}  \frac{\frac{d}{dy} p_i(y)}{p_i(y)} = \frac{1}{2} \frac{1+ \lambda_i(h)}{1-\lambda_i(h)}, \quad  \lim_{y \to 1} \frac{d}{dy}\left( y \frac{\frac{d}{dy} p_i(y)}{p_i(y)} \right) = - \frac{(1-2w_i) \lambda_i(h)}{(1-\lambda_i(h))^2}. 
\]
\end{proposition}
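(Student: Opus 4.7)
The plan is to rewrite everything via logarithmic differentiation, using the change of variable $t=\log y$ so that $y\,d/dy=d/dt$. The key identities are
\[
\frac{p_i'(y)}{p_i(y)}=\frac{d}{dy}\log p_i(y),\qquad
\frac{d}{dy}\!\left(y\,\frac{p_i'(y)}{p_i(y)}\right)=\frac{1}{y}\,\frac{d^2}{dt^2}\log p_i(e^t),
\]
so the problem reduces to evaluating the Taylor expansion of $\log p_i(e^t)$ at $t=0$ up to order $t^2$. I would then split into the two cases according to whether $\lambda_i(h)$ equals $1$.

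Case (ii) is essentially mechanical. Since $1-\lambda_i(h)y^{w_i}$ does not vanish at $y=1$, the function $p_i$ is smooth and nonzero there, so writing
\[
\log p_i(y)=\log\!\left(y^{1/2}-\lambda_i(h)y^{w_i-1/2}\right)-\log\!\left(1-\lambda_i(h)y^{w_i}\right)
\]
and differentiating (once or twice in $t$, then setting $t=0$) yields the three limits after a short rational simplification. A convenient shortcut for the second derivative: each summand above has the form $\log(e^{at}-\lambda e^{bt})$, whose second derivative at $t=0$ works out by a direct quotient-rule computation to $-\lambda(a-b)^2/(1-\lambda)^2$; substituting $(a,b)=(1/2,w_i-1/2)$ and $(0,w_i)$ and subtracting produces the asserted $-(1-2w_i)\lambda_i(h)/(1-\lambda_i(h))^2$.

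Case (i) is the delicate one, because both the numerator and the denominator of $p_i$ vanish at $y=1$. My approach is to factor out the common zero,
\[
p_i(y)=y^{1/2}\cdot\frac{1-y^{w_i-1}}{1-y^{w_i}},
\]
take logarithms, and substitute $y=e^t$, yielding
\[
\log p_i(e^t)=\frac{t}{2}+\log(1-e^{(w_i-1)t})-\log(1-e^{w_i t}).
\]
I would then use the routine expansion
\[
\log(1-e^{at})=\log(-at)+\frac{at}{2}+\frac{(at)^2}{24}+O(t^3),
\]
obtained by factoring $-at$ out of $1-e^{at}$ and expanding $\log(1+\cdots)$. The two $\log(-at)$ contributions combine into the constant $\log((w_i-1)/w_i)$, the linear terms together with the initial $t/2$ cancel in pairs, and the quadratic terms combine to $\bigl((w_i-1)^2-w_i^2\bigr)t^2/24=(1-2w_i)\,t^2/24$. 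Reading off the value and the first two Taylor coefficients of $\log p_i(e^t)$ at $t=0$ (and exponentiating for the value itself) gives the three claimed limits.

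The main obstacle is purely bookkeeping. Each of the two logarithms appearing in case (i) has an isolated singularity at $t=0$, and its first and second derivatives carry $1/t$ and $1/t^2$ poles; the asserted finite limits arise only after these singular pieces cancel between the two summands, so the expansion must be carried out to a sufficiently high order and the cancellations tracked carefully. Once that is done, everything else is linear algebra on the coefficients of $t$ and $t^2$.
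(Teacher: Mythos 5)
Your proposal is correct, and it is more self-contained than what the paper offers: the paper does not prove Proposition~\ref{prop:lim} at all, but simply refers to the proof of Proposition~5.2 in Dimca's article for part (i) and asserts that (ii) follows from ``a similar elementary but tedious computation.'' Your logarithmic-derivative setup in the variable $t=\log y$ is exactly the right way to organize that tedium, and the key identities check out: for $F(t)=\log(e^{at}-\lambda e^{bt})$ with $\lambda\neq 1$ one indeed gets $F'(0)=\frac{a-\lambda b}{1-\lambda}$ and $F''(0)=-\frac{\lambda(a-b)^2}{(1-\lambda)^2}$, which with $(a,b)=(\frac12,w_i-\frac12)$ and $(0,w_i)$ give $\frac12\frac{1+\lambda_i(h)}{1-\lambda_i(h)}$ and $-\frac{(1-2w_i)\lambda_i(h)}{(1-\lambda_i(h))^2}$; and in case (i) the expansion $\log(1-e^{at})=\log(-at)+\frac{at}{2}+\frac{(at)^2}{24}+O(t^3)$ yields the cancellation of the linear terms and the coefficient $\frac{1-2w_i}{24}$ of $t^2$, hence $G''(0)=\frac{1-2w_i}{12}$. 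One small caveat: since typically $0<w_i<1$, the value $p_i(1)=1-\frac{1}{w_i}$ is negative, so $\log p_i$ is not real-valued near $y=1$ and ``exponentiating for the value itself'' is not quite legitimate; you should read off $\lim_{y\to 1}p_i(y)$ directly from the factorization $p_i(y)=y^{1/2}\frac{1-y^{w_i-1}}{1-y^{w_i}}\to\frac{w_i-1}{w_i}$, and note that the branch ambiguity (an additive constant in the logarithm) is irrelevant for the first and second logarithmic derivatives, which is all the rest of the argument uses. With that adjustment the proof is complete.
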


\begin{proof} For (i) see the proof of \cite[Proposition~5.2]{D}. Statement (ii) follows from a similar elementary but tedious computation. 
\end{proof}

Let $I_0:=\{1, \ldots, n \}$ and let $H \subset G$ be a subgroup of $G$. For a subset $I \subset I_0$ ($I = \emptyset$ is admitted)  let $H^I$ be the maximal subgroup of $H$ fixing the coordinates $x_i$, $i \in I$.

\begin{lemma} \label{lem:p'}
Let $H \subset G$ be a subgroup of $G$ and $i \in I_0$. Then
\[ \sum_{h \in H \setminus H^{\{ i \}}} \frac{1+ \lambda_i(h)}{1-\lambda_i(h)} =0
\]
\end{lemma}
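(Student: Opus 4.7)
The plan is to exploit the group structure on the values $\lambda_i(h)$ and pair up inverses.

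First I would observe that the restriction of the character $\lambda_i \colon G \to \CC^*$ to $H$ is a group homomorphism whose kernel is exactly $H^{\{i\}}$. Hence the image $\lambda_i(H)$ is a finite subgroup of $\CC^*$, necessarily cyclic of some order $d$, generated by a primitive $d$-th root of unity $\zeta$. The quotient map $H \to H/H^{\{i\}} \cong \langle \zeta \rangle$ identifies each coset of $H^{\{i\}}$ with one value $\zeta^k$, and the summand $\tfrac{1+\lambda_i(h)}{1-\lambda_i(h)}$ depends only on this value. Thus the sum rewrites as
\[
\sum_{h \in H \setminus H^{\{i\}}} \frac{1+\lambda_i(h)}{1-\lambda_i(h)} \;=\; |H^{\{i\}}| \sum_{k=1}^{d-1} \frac{1+\zeta^k}{1-\zeta^k}.
\]

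Next I would pair the index $k$ with $d-k$, using $\zeta^{d-k} = \zeta^{-k}$. A one-line manipulation shows
\[
\frac{1+\omega}{1-\omega} + \frac{1+\omega^{-1}}{1-\omega^{-1}} \;=\; \frac{1+\omega}{1-\omega} + \frac{\omega+1}{\omega-1} \;=\; 0
\]
for any $\omega \ne 1$ with $\omega^{-1} \ne \omega$. When $d$ is even the unique self-paired index $k = d/2$ gives $\zeta^k = -1$, and then $\tfrac{1+(-1)}{1-(-1)} = 0$, so this fixed term contributes nothing either. Summing over the pairs yields zero.

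Since there is essentially nothing more to do, I do not expect a serious obstacle; the only point requiring a moment's care is the parity argument above and the verification that the counting factor $|H^{\{i\}}|$ is indeed constant across cosets, which is immediate from the fact that $H^{\{i\}}$ is a subgroup and the summand is constant on each coset. The lemma then follows.
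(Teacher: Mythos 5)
Your proof is correct and rests on the same idea as the paper's: the summand $\frac{1+\omega}{1-\omega}$ changes sign under $\omega\mapsto\omega^{-1}$ and the index set is closed under inversion, so everything cancels in pairs (the paper phrases this by splitting the fraction and reindexing $h\mapsto h^{-1}$ rather than passing to the cyclic image, but that reduction is harmless). No gaps.
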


\begin{proof} One has
\[ \sum_{h \in H \setminus H^{\{ i \}}} \frac{1+ \lambda_i(h)}{1-\lambda_i(h)} = \sum_{h \in H \setminus H^{\{ i \}}} \frac{1}{1-\lambda_i(h)} + \sum_{h \in H \setminus H^{\{ i \}}} \frac{1}{\lambda_i(h^{-1})-1} = 0.
\]
\end{proof}

\begin{proposition} \label{prop:zeta}
Let $r \in \ZZ$, $r \geq 2$, and $\zeta_r= {\bf e}[1/r]$ be a primitive $r$-th root of unity. Then one has
\[ -\sum_{k=1}^{r-1}  \frac{\zeta_r^k}{(1-\zeta_r^k)^2} = \frac{r^2-1}{12}. \]
\end{proposition}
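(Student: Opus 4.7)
The plan is to split the sum using the elementary partial-fraction identity
\[
\frac{z}{(1-z)^2} \;=\; \frac{1}{(1-z)^2} \;-\; \frac{1}{1-z},
\]
applied with $z=\zeta_r^k$. This reduces the claim to evaluating the two simpler sums
\[
S_1 \;:=\; \sum_{k=1}^{r-1}\frac{1}{1-\zeta_r^k}, \qquad
S_2 \;:=\; \sum_{k=1}^{r-1}\frac{1}{(1-\zeta_r^k)^2},
\]
since the quantity we want equals $S_1 - S_2$.

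To compute $S_1$ and $S_2$ I would exploit the factorisation
\[
P(x) \;:=\; \prod_{k=1}^{r-1}(x-\zeta_r^k) \;=\; 1 + x + x^2 + \cdots + x^{r-1},
\]
which is just the cyclotomic-type polynomial having the non-trivial $r$-th roots of unity as its zeros. One logarithmic derivative gives $P'(x)/P(x) = \sum_{k=1}^{r-1}(x-\zeta_r^k)^{-1}$, so $S_1 = P'(1)/P(1)$. Differentiating once more yields
\[
\frac{P''(x)P(x)-P'(x)^2}{P(x)^2} \;=\; -\sum_{k=1}^{r-1}\frac{1}{(x-\zeta_r^k)^2},
\]
and evaluating at $x=1$ gives $S_2 = (P'(1)/P(1))^2 - P''(1)/P(1)$.

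The three values $P(1)=r$, $P'(1)=\tfrac{r(r-1)}{2}$, and $P''(1)=\tfrac{r(r-1)(r-2)}{3}$ are immediate from the standard formulas for $\sum_{k=0}^{r-1} k$ and $\sum_{k=0}^{r-1} k(k-1)$. Substituting gives $S_1 = (r-1)/2$ and, after putting the two fractions on a common denominator, $S_2 = (r-1)(5-r)/12$. A short simplification, factoring out $(r-1)$, then produces
\[
S_1 - S_2 \;=\; \frac{6(r-1) - (r-1)(5-r)}{12} \;=\; \frac{(r-1)(r+1)}{12} \;=\; \frac{r^2-1}{12},
\]
which is the required identity. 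The proof is entirely elementary; there is no real obstacle, but the step deserving most care is the final algebraic collapse, where a dropped sign would spoil the neat cancellation.
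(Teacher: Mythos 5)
Your argument is correct: the partial-fraction identity, the values $P(1)=r$, $P'(1)=\tfrac{r(r-1)}{2}$, $P''(1)=\tfrac{r(r-1)(r-2)}{3}$, and the resulting $S_1=\tfrac{r-1}{2}$, $S_2=\tfrac{(r-1)(5-r)}{12}$ all check out, and $S_1-S_2=\tfrac{r^2-1}{12}$ as required. The paper takes a slightly different, though equally elementary, route: it introduces the auxiliary function $q(t):=-\sum_{k=1}^{r-1}\frac{1}{1-\zeta_r^k t}$, observes that the desired sum is $\lim_{t\to 1}q'(t)$, writes $q(t)$ in closed form as an explicit ratio of polynomials with denominator $\sum_{k=0}^{r-1}t^k$, and differentiates that rational expression at $t=1$. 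Your version avoids the auxiliary variable by splitting the summand first and then reading off $S_1$ and $S_2$ from the first and second logarithmic derivatives of $P(x)=1+x+\cdots+x^{r-1}$; this keeps each intermediate quantity meaningful (a sum of reciprocals, a sum of squared reciprocals) and localizes the sign-sensitive step to the final subtraction. Both proofs ultimately reduce to the same power sums $\sum k$ and $\sum k(k-1)$, so the difference is one of bookkeeping rather than substance, but your decomposition is arguably cleaner to verify.
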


\begin{proof} One has
\[ -\sum_{k=1}^{r-1}  \frac{\zeta_r^k}{(1-\zeta_r^k)^2} = \lim_{t \to 1} q'(t) \mbox{ where } q(t):= - \sum_{k=1}^{r-1} \frac{1}{1- \zeta_r^k t}. \]
One can easily see that
\[ q(t) = \frac{- r \left( \sum_{k=0}^{r-2} t^k \right) + \sum_{k=0}^{r-2} (k+1) t^k}{\sum_{k=0}^{r-1} t^k}. \]
This implies
\[ \lim_{t \to 1} q'(t) = \frac{1}{r^2} \left[  \sum_{k =1}^{r-2} k(k-r+1)  r - \left( \sum_{\ell=1}^{r-1} (\ell-r) \right) \left( \sum_{k=1}^{r-1} k \right) \right] = \frac{r^2-1}{12}. \]
\end{proof}

\begin{corollary} \label{cor:sum}
Let $H \subset G$ be a subgroup of $G$ and $i \in I_0$. Then
\[ - \sum_{h \in H \setminus H^{\{ i \}}} \frac{\lambda_i(h)}{(1-\lambda_i(h))^2} = \frac{|H \cap H^{\{ i \}}|(|H/H \cap H^{\{ i \}}|^2-1)}{12}
\]
\end{corollary}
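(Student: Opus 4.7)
The strategy is to reduce the sum on the left hand side to the sum over roots of unity that was evaluated in Proposition~\ref{prop:zeta}. First I would observe that the map
\[
\lambda_i \colon H \longrightarrow \CC^*, \qquad h \longmapsto \lambda_i(h),
\]
is a group homomorphism, since $G$ acts diagonally on $\CC^n$ and $\lambda_i$ is simply the $i$-th diagonal character of the restricted representation. Its kernel is exactly the subgroup of $H$ fixing the coordinate $x_i$, which by definition is $H^{\{i\}}$. Note in particular that $H^{\{i\}}\subset H$, so $H\cap H^{\{i\}}=H^{\{i\}}$, and the right hand side equals $|H^{\{i\}}|\bigl(|H/H^{\{i\}}|^2-1\bigr)/12$.

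Next, write $r:=|H/H^{\{i\}}|$. The image of $\lambda_i$ is a finite subgroup of $\CC^*$ of order $r$, hence a cyclic group generated by a primitive $r$-th root of unity $\zeta_r$. By the first isomorphism theorem, for each $k=1,\dots,r-1$ there are exactly $|H^{\{i\}}|$ elements $h\in H$ with $\lambda_i(h)=\zeta_r^k$, while the $|H^{\{i\}}|$ remaining elements of $H$ (namely those in $H^{\{i\}}$) are precisely those omitted from the summation. Grouping the sum according to the value of $\lambda_i(h)$ therefore gives
\[
-\sum_{h\in H\setminus H^{\{i\}}}\frac{\lambda_i(h)}{(1-\lambda_i(h))^2}
= -|H^{\{i\}}|\sum_{k=1}^{r-1}\frac{\zeta_r^k}{(1-\zeta_r^k)^2}.
\]

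Finally, applying Proposition~\ref{prop:zeta} to the inner sum yields the value $(r^2-1)/12$, and substituting back gives the claimed identity. The argument is essentially mechanical; the only point that requires care is the identification of $\ker \lambda_i$ with $H^{\{i\}}$ and the counting of fibres of $\lambda_i$, but both are immediate from the diagonal nature of the action. No genuine obstacle arises beyond invoking Proposition~\ref{prop:zeta}.
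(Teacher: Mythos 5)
Your proposal is correct and follows essentially the same route as the paper: identify the image of the induced character $\lambda_i$ on $H/H^{\{i\}}$ as a cyclic group of order $r=|H/H^{\{i\}}|$, count the fibres (each of size $|H^{\{i\}}|$), and invoke Proposition~\ref{prop:zeta}. You merely spell out details the paper leaves implicit (the trivial case $r=1$ is also immediate, as both sides vanish).
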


\begin{proof}
The image of the factor group $H/H \cap H^{\{ i \}}$ under the induced character $\lambda_i : H/H \cap H^{\{ i \}} \to \CC^\ast$ is a finite abelian subgroup of the unit circle $S^1$ and hence cyclic. Therefore the formula follows from Proposition~\ref{prop:zeta}.
\end{proof}

Let 
\[ ((x)) := \left\{ \begin{array}{cl} x - [x] - \frac{1}{2} & \mbox{if } x \in \RR, x \not\in \ZZ, \\
0 & \mbox{if } x \in \ZZ. \end{array} \right. 
\]

\begin{proposition} \label{prop:Dedekind}
Let $r \in \ZZ$, $r \geq 2$, $\zeta_r= {\bf e}[1/r]$ be a primitive $r$-th root of unity, and $a,b$ be integers satisfying $0 < a,b < r$. Then one has
\[
\frac{1}{4r} \sum _{k=1, \atop r \not \, | \, ak, bk}^{r-1} \frac{1+\zeta_r^{ak}}{1-\zeta_r^{ak}} \frac{1+\zeta_r^{bk}}{1-\zeta_r^{bk}} = - \sum_{k=1}^{r-1} ((\frac{ak}{r}))((\frac{bk}{r})). 
\]
\end{proposition}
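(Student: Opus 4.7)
The plan is to rewrite both sides as cotangent sums and match them via a finite Fourier analysis of the sawtooth function $((\cdot))$. For the left-hand side I apply the elementary identity
\[
\frac{1+e^{i\theta}}{1-e^{i\theta}}=i\cot(\theta/2),\qquad \theta\notin 2\pi\ZZ,
\]
with $\theta=2\pi ak/r$ and $\theta=2\pi bk/r$. Each product of two factors then equals $-\cot(\pi ak/r)\cot(\pi bk/r)$, so the LHS becomes
\[
-\frac{1}{4r}\sum_{\substack{k=1\\ r\nmid ak,\ r\nmid bk}}^{r-1}\cot(\pi ak/r)\cot(\pi bk/r).
\]

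Next I would establish the finite Fourier expansion
\[
((n/r))=\frac{i}{2r}\sum_{\ell=1}^{r-1}\cot(\pi\ell/r)\,e^{2\pi i\ell n/r}\qquad(n\in\ZZ/r\ZZ)
\]
by computing the Fourier coefficients of $g(n):=((n/r))$ directly. The key auxiliary identity is $\sum_{n=1}^{r-1}n\omega^n=r/(\omega-1)$ for $\omega^r=1,\omega\neq 1$, obtained by differentiating the geometric sum; after simplification, $1/(\omega-1)+1/2$ with $\omega=e^{-2\pi i\ell/r}$ reduces to $\tfrac{i}{2}\cot(\pi\ell/r)$.

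Substituting this expansion into $-\sum_{k=1}^{r-1}((ak/r))((bk/r))$, expanding the product of two Fourier series, and collapsing the inner sum over $k$ via the orthogonality $\sum_{k=0}^{r-1}e^{2\pi i(aj+bm)k/r}=r$ if $r\mid aj+bm$ and $0$ otherwise, one obtains a finite double cotangent sum indexed by pairs $(j,m)\in\{1,\dots,r-1\}^2$ with $aj+bm\equiv 0\pmod r$. A change of variable $m\mapsto r-m$, which flips the sign of $\cot(\pi m/r)$, converts the index condition to $aj\equiv bm\pmod r$.

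The final step matches this double-indexed cotangent sum with the single-indexed sum on the LHS. Parametrizing the valid indices $k$ by pairs $(j_k,m_k):=(ak\bmod r,\,bk\bmod r)$ produces pairs satisfying $bj_k\equiv am_k\pmod r$; combined with the symmetry of the summand $\cot(\pi j/r)\cot(\pi m/r)$ in $(j,m)$ and the exchange $(j,m)\leftrightarrow(m,j)$, which swaps the conditions $bj\equiv am$ and $aj\equiv bm$, this delivers the desired equality. The identity is essentially the classical Rademacher cotangent representation of a Dedekind sum, and the main obstacle is the book-keeping of multiplicities when $\gcd(a,r)$ and $\gcd(b,r)$ exceed one; this can be organized by splitting the cotangent sums according to the images of multiplication by $a$ and by $b$ on $\ZZ/r\ZZ$, so that the parametrization $k\mapsto(ak\bmod r,bk\bmod r)$ can be compared fibrewise against the pair-index on the Fourier side.
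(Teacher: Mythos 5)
Your finite Fourier expansion of $((n/r))$ in terms of $\cot(\pi\ell/r)$ is exactly the Eisenstein formula that the paper quotes from Hirzebruch--Zagier (the two are identical under $\frac{1+{\bf e}[x]}{1-{\bf e}[x]}=\sqrt{-1}\cot\pi x$), and your use of orthogonality of characters to collapse the sum over $k$ is the same mechanism as in the paper's proof; up to that point the two arguments coincide. The genuine gap is in your final step. After orthogonality you are left with a \emph{double} sum over pairs $(j,m)$ subject to $aj+bm\equiv 0\pmod r$, and you propose to match it against the \emph{single} sum over $k$ on the left-hand side via the parametrization $k\mapsto(ak\bmod r,\,bk\bmod r)$, conceding that the multiplicities need book-keeping when $\gcd(a,r)$ and $\gcd(b,r)$ both exceed one. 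That book-keeping cannot be carried out: the map $k\mapsto(ak,bk)$ is in general neither injective nor surjective onto the solution set of the congruence, and in that parameter range the asserted identity is simply false. Take $r=6$, $a=b=2$: the left-hand side is $\frac{1}{24}\sum_{k\in\{1,2,4,5\}}\bigl(\frac{1+\zeta_6^{2k}}{1-\zeta_6^{2k}}\bigr)^2=\frac{1}{24}\cdot 4\cdot\bigl(\pm\frac{\sqrt{-1}}{\sqrt3}\bigr)^2=-\frac{1}{18}$, whereas the right-hand side is $-\sum_{k=1}^{5}((\frac{2k}{6}))^2=-4\cdot\frac{1}{36}=-\frac19$. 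So the sentence ``this delivers the desired equality'' marks a step that is not merely unproved but unprovable without an extra hypothesis.

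The paper's proof avoids this by setting $b=1$ ``for simplicity'': then the congruence $m+ak\equiv 0\pmod r$ determines $m$ uniquely from each admissible $k$, the double sum collapses on the nose, and no matching problem arises. That reduction extends, via the substitution $k\mapsto b^{-1}k$, exactly to the case $\gcd(b,r)=1$ (or symmetrically $\gcd(a,r)=1$), which is the hypothesis under which the statement is actually correct --- and is the hypothesis in the classical theorem of Hirzebruch--Zagier being generalized. To repair your write-up you should add the assumption that at least one of $a,b$ is prime to $r$, perform the reduction to $b=1$, and then your argument becomes the paper's; as stated, your fibrewise comparison in the last paragraph cannot close the proof. (The same caveat applies to how the statement is invoked in Corollary~\ref{cor:Dedekind}, where the relevant pairs of characters need to be checked against this coprimality condition.)
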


\begin{remark}
The right hand side of the formula of Proposition~\ref{prop:Dedekind} is a generalized Dedekind sum and Proposition~\ref{prop:Dedekind} is a slight generalization of \cite[5.2 Theorem 1]{HZ}, since
\[  \frac{1+ {\bf e}[x]}{1-{\bf e}[x]} = \sqrt{-1} \cot  \pi x
\]
for any real number $x$.
The difference is that \cite[5.2 Theorem 1]{HZ} is only formulated for integers $a,b$ prime to $r$.
\end{remark}

\begin{proof}[Proof of Proposition~\ref{prop:Dedekind}]
We follow the proof of \cite[5.2 Theorem 1]{HZ}. For simplicity, we assume $b=1$. By the formula \cite[5.2 (2)]{HZ} which goes back to Eisenstein \cite{Ei}, we have
\[ (( \frac{q}{r} )) = - \frac{1}{2r} \sum_{\ell=1}^{r-1} \zeta_r^{\ell q} \frac{\zeta_r^\ell+1}{\zeta_r^\ell-1}\]
for any integers $q$ and $r$.
(Note that there is a minor misprint in \cite[5.2 (2)]{HZ}.) Applying this formula, we get
\begin{eqnarray*}
\sum_{\ell=1}^{r-1} ((\frac{a\ell}{r}))((\frac{\ell}{r})) & = & \sum_{\ell=1}^r ((\frac{a\ell}{r}))((\frac{\ell}{r})) =\frac{1}{4r^2}   \sum_{\ell=1}^r \sum_{m=1}^{r-1}  \sum_{k=1}^{r-1}  \zeta_r^{(m+ak)\ell} \frac{\zeta_r^m+1}{\zeta_r^m-1} \frac{\zeta_r^k+1}{\zeta_r^k-1} \\
& = & \frac{1}{4r} \sum_{k=1, \atop r \not \, | \, ak}^{r-1} \frac{\zeta_r^{-ak}+1}{\zeta_r^{-ak}-1} \frac{\zeta_r^k+1}{\zeta_r^k-1} 
 =  - \frac{1}{4r} \sum_{k=1, \atop r \not \, | \, ak}^{r-1} \frac{1+\zeta_r^{ak}}{1-\zeta_r^{ak}} \frac{1+\zeta_r^k}{1-\zeta_r^k},
\end{eqnarray*}
since
\[   \sum_{\ell=1}^r \zeta_r^{(m+ak)\ell} = \left\{ \begin{array}{cl} 0 & \mbox{if } m+ak \not\equiv 0 \, \mbox{mod} \, r ,\\
r &  \mbox{if } m+ak \equiv 0 \, \mbox{mod} \, r . \end{array} \right.
\]
\end{proof}

\begin{corollary} \label{cor:Dedekind} Let $K \subset J \subset I_0$. Then 
\[  \frac{1}{4} \sum_{h \in G^K} \left( \sum_{j \in J \setminus K, \atop \lambda_j(h) \neq 1} \frac{1+\lambda_j(h)}{1-\lambda_j(h)} \right)^2 = - |G^K| \sum_{h \in G^K}  \left( \sum_{j \in J \setminus K} ((a_jw_j)) \right)^2, \]
where $\lambda_j(h) = {\bf e}[a_jw_j]$ for all $h \in G^K$ and $j \in J \setminus K$.
\end{corollary}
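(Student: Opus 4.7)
The plan is to expand both squares to convert the identity into a double sum over pairs $(j,\ell) \in (J\setminus K) \times (J\setminus K)$: writing $\bigl(\sum_j A_j\bigr)^2 = \sum_{j,\ell} A_j A_\ell$ and interchanging the order of summation, it suffices to prove, for each such pair, that
\[
\frac{1}{4} \sum_{h \in G^K,\ \lambda_j(h),\lambda_\ell(h)\neq 1} \frac{(1+\lambda_j(h))(1+\lambda_\ell(h))}{(1-\lambda_j(h))(1-\lambda_\ell(h))} = -|G^K| \sum_{h \in G^K} ((a_jw_j))((a_\ell w_\ell)).
\]
On the right one may restrict to the same index set $\{h:\lambda_j(h),\lambda_\ell(h) \neq 1\}$ without changing the value, because $\lambda_j(h)=1$ forces $a_jw_j \in \ZZ$ and hence $((a_jw_j))=0$.

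For the case when $G^K$ is cyclic of order $r=|G^K|$ with generator $h_0$, I write $\lambda_j(h_0) = \zeta_r^a$ and $\lambda_\ell(h_0) = \zeta_r^b$, parameterise $h = h_0^k$ with $k=0,\ldots,r-1$, and observe that the pair-wise identity becomes exactly $r$ times the formula of Proposition~\ref{prop:Dedekind}, so it holds in this case.

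For a general finite abelian $G^K$, I would apply Eisenstein's expansion
\[
((q/r)) = -\frac{1}{2r} \sum_{m=1}^{r-1} \zeta_r^{mq} \frac{\zeta_r^m+1}{\zeta_r^m-1}
\]
(with $r$ an integer divisible by the exponent of $G^K$) to each sawtooth factor on the right, expand the product, and use orthogonality of characters on $G^K$ to collapse the resulting triple sum into the form appearing on the left; alternatively one may decompose $G^K$ into cyclic factors via the structure theorem and reduce to the cyclic case above.

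The main obstacle I expect is the bookkeeping in the non-cyclic reduction: the left-hand side depends on $h$ only through the pair $(\lambda_j(h),\lambda_\ell(h))$ and so factors through the image of the map $(\lambda_j,\lambda_\ell): G^K \to (\CC^*)^2$, whereas the right-hand side carries a manifest prefactor of $|G^K|$. Reconciling these two dependencies will require careful tracking of the size of the kernel of $(\lambda_j,\lambda_\ell)$ and of how it interacts with the Eisenstein/orthogonality step, in order to recover the asserted prefactor $|G^K|$.
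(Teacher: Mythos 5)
Your overall strategy --- expand the square into pairwise terms and feed each pair into Proposition~\ref{prop:Dedekind} via the image of $G^K$ under the characters --- is the same as the paper's (its proof just says ``by the same arguments as in the proof of Corollary~\ref{cor:sum}''), and your cyclic case is handled correctly modulo taking Proposition~\ref{prop:Dedekind} at face value. But the step you explicitly postpone, reconciling the prefactor $|G^K|$ on the right with the fact that the left-hand side sees $h$ only through $(\lambda_j(h),\lambda_\ell(h))$, is not bookkeeping: it is the entire content of the statement, and neither of your two sketched routes closes it. Already the diagonal terms $j=\ell$ make this concrete. If $\lambda_j|_{G^K}$ has image of order $d$ and kernel $N_j$, then applying Proposition~\ref{prop:Dedekind} with $r=d$, $a=b=1$ to the image gives
\[
\frac{1}{4}\sum_{h\in G^K,\ \lambda_j(h)\neq 1}\Bigl(\frac{1+\lambda_j(h)}{1-\lambda_j(h)}\Bigr)^{2}
=-\,d\,|N_j|\sum_{k=1}^{d-1}((k/d))^{2},
\qquad
-|G^K|\sum_{h\in G^K}((a_jw_j))^{2}=-\,d\,|N_j|^{2}\sum_{k=1}^{d-1}((k/d))^{2},
\]
and $\sum_{k=1}^{d-1}((k/d))^{2}>0$ as soon as $d\geq 3$, so the two sides differ by the factor $|N_j|$ whenever $\lambda_j$ is not faithful on $G^K$. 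Hence the pairwise identity genuinely does not factor through the image of $(\lambda_j,\lambda_\ell)$, and no orthogonality-of-characters manipulation on $G^K$ can manufacture the missing factor $|N_j|$; likewise the structure-theorem reduction fails because the image of a pair of characters of an abelian group need not be cyclic at all, so Proposition~\ref{prop:Dedekind} does not apply to it.

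Be aware also that your cyclic case is only consistent with the Corollary because you invoke Proposition~\ref{prop:Dedekind} with $r=|G^K|$ and exponents $a,b$ that may share a common factor with $r$; the paper proves that Proposition only for $b=1$ (``for simplicity''), and for $a,b$ both non-coprime to $r$ (try $r=9$, $a=b=3$) its two sides disagree for exactly the same kernel-counting reason. So the obstacle you flag at the end of your proposal is a genuine gap, not a technicality: as written, your argument establishes the Corollary only when each pair $(\lambda_j,\lambda_\ell)$ is jointly faithful on $G^K$ with cyclic image, and in the remaining cases the asserted identity requires either an additional hypothesis or a reformulation (replacing $|G^K|$ by the order of the relevant image). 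You should either supply that hypothesis and justify it from the geometric setup, or stop at the cyclic, jointly faithful case and say so.
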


\begin{proof} This follows from Proposition~\ref{prop:Dedekind} by the same arguments as in the proof of Corollary~\ref{cor:sum}.
\end{proof}

\begin{proposition} \label{prop:mu} 
One has
\begin{equation}
 \mu_{(f,G)} = \frac{(-1)^n}{|G|} \left\{ \sum_{I \subset I_0} \prod_{i \in I} \left( 1 - \frac{1}{w_i} \right) \left[ \sum_{I \subset J \subset I_0} (-1)^{|J|-|I|} \left| G^J \right|^2 \right] \right\} .
\end{equation}
\end{proposition}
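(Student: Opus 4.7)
The starting point is the formula $\mu_{(f,G)} = \lim_{y\to 1}\chi(f,G)(y)$ together with the explicit expression for $\chi(f,G)(y)$ stated just before Proposition~\ref{prop:lim}. As $y \to 1$, the prefactor $y^{\mathrm{age}(g)-(n-n_g)/2}$ tends to $1$, and by Proposition~\ref{prop:lim}(i),(ii) each factor $p_i(y)$ in the product over $\{i : \lambda_i(g) = 1\}$ tends to $1 - 1/w_i$ when $\lambda_i(h) = 1$ and to $1$ when $\lambda_i(h) \neq 1$. Writing $I(g) := \{i \in I_0 : \lambda_i(g) = 1\}$, the product therefore collapses to $\prod_{i \in I(g) \cap I(h)}(1 - 1/w_i)$, which gives
\[
\mu_{(f,G)} = \frac{(-1)^n}{|G|} \sum_{g,h \in G} \prod_{i \in I(g) \cap I(h)} \left( 1 - \frac{1}{w_i} \right).
\]

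Next I regroup the double sum according to the value $I := I(g) \cap I(h) \subset I_0$. The key combinatorial observation is that $I(g) \supset I$ is equivalent to $g \in G^I$, and likewise for $h$, so the number of pairs $(g,h) \in G \times G$ with $I(g) \cap I(h) \supset I$ equals $|G^I|^2$. By M\"obius inversion on the Boolean lattice of subsets of $I_0$, the number of pairs $(g,h)$ with $I(g) \cap I(h) = I$ equals exactly
\[
\sum_{I \subset J \subset I_0} (-1)^{|J|-|I|} |G^J|^2 .
\]
Substituting this count into the preceding expression and interchanging the order of summation over $I$ and over pairs $(g,h)$ yields precisely the formula claimed.

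The proof presents no real obstacle: the limit evaluation is immediate from Proposition~\ref{prop:lim}, and the inclusion-exclusion is routine. The only conceptual point is to recognize that the contribution to $\mu_{(f,G)}$ from a pair $(g,h)$ depends only on the intersection $I(g) \cap I(h)$ of the two index sets of fixed coordinates, which is exactly the combinatorial structure that the stated formula encodes through the subgroups $G^J$.
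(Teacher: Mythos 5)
Your proof is correct and follows essentially the same route as the paper: evaluate $\mu_{(f,G)}=\lim_{y\to 1}\chi(f,G)(y)$ via Proposition~\ref{prop:lim} and then reorganize the resulting double sum over $G\times G$ by inclusion-exclusion over subsets of $I_0$. The only difference is cosmetic: your single M\"obius inversion based on the observation that the number of pairs with $I(g)\cap I(h)\supset I$ equals $|G^I|^2$ is a cleaner packaging of the paper's two-step stratification via the sets $G_J$ and $G_{I,J}$ and the sign identity~(\ref{eq:sign}).
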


\begin{proof} Let $J \subset I_0$. Let $G_J$ be the set of elements of $g \in G$ with $\lambda_j(g)=1$ for $j \in J$ and $\lambda_j(g) \neq 1$ for $j \not\in J$, i.e. the set of elements of $G$ which fix the coordinates $x_j$, $j \in J$, and only these coordinates. Then
\[ |G_J| = \sum_{K, \atop J \subset K \subset I_0} (-1)^{|K|-|J|} |G^K|. \]
Let $I \subset J$. Let $G_{I,J}$ be the set of elements $g$ of $G$ with $\lambda_i(g)=1$ for $i \in I$ and $\Lambda_j(g) \neq 1$ for $j \in J \setminus I$ (and $\lambda_k(g)$ arbitrary for $k \in I_0 \setminus J$). Then
\[ |G_{I,J}| = \sum_{K, \atop I \subset K \subset J} (-1)^{|K|-|I|} |G^K|. \]
By Proposition~\ref{prop:lim} one has
\begin{eqnarray*}
\lim_{y \to 1} \chi(f,G)(y)  & =  & \frac{(-1)^n}{|G|} \sum_{J, \atop J \subset I_0} |G_J| \left( \sum_{I, \atop I \subset J} \prod_{i \in I} \left( 1- \frac{1}{w_i} \right)  |G_{I,J}|  \right) \\
& =  & {}  \frac{(-1)^n}{|G|} \sum_{I, \atop I \subset I_0} \prod_{i \in I} \left( 1- \frac{1}{w_i} \right) \left( \sum_{J, \atop I \subset J \subset I_0} |G_J||G_{I,J}| \right).
\end{eqnarray*}
Now let $I \subset I_0$ be fixed. Then
\begin{eqnarray*}
\sum_{J, \atop I \subset J \subset I_0} |G_J||G_{I,J}|
& = & \sum_{J, \atop I \subset J \subset I_0} \left( \sum_{K, \atop J \subset K \subset I_0} (-1)^{|K|-|J|} |G^K| \right) \left( \sum_{L, \atop I \subset L \subset J} (-1)^{|L|-|I|} |G^L| \right) \\
& = & {} \sum_{L, \atop I \subset L \subset I_0} \sum_{K, \atop L \subset K \subset I_0} \left( \sum_{J, \atop L \subset J \subset K} (-1)^{|K|+|L|-|I|-|J|} \right) |G^K||G^L| \\
& = & {} \sum_{K, \atop I \subset K \subset I_0} (-1)^{|K|-|I|} |G^K|^2,
\end{eqnarray*}
since for fixed $L \subset I_0$ and $K \subset I_0$ with $L \subset K$
\begin{equation} \label{eq:sign}
\sum_{J, \atop L \subset J \subset K} (-1)^{|K|+|L|-|I|-|J|} = (-1)^{|K|-|I|} (1-1)^{|K|-|L|}= \left\{ \begin{array}{cl} (-1)^{|K|-|I|} & \mbox{for } L=K, \\
0 & \mbox{otherwise.} \end{array} \right.
\end{equation}
\end{proof}
Now we are ready to state the main result of our paper.
\begin{theorem}\label{thm:main}
One has
\[ {\rm Var}_{(f,G)} =\sum_{p,q\in\QQ}(-1)^{(p-n)+q} \left(q- \frac{n}{2} \right)^2 h^{p,q}(f,G)
= \frac{1}{12} \hat{c} \cdot\mu_{(f,G)}. \]
\end{theorem}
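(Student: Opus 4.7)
The plan is to compute directly
\[ {\rm Var}_{(f,G)}=\lim_{y\to 1}\frac{d}{dy}\Bigl(y\frac{d}{dy}\chi(f,G)(y)\Bigr) \]
from the explicit $\chi_y$-formula, reorganise the resulting double sum over $(g,h)\in G\times G$ using Lemma~\ref{lem:p'} and Corollaries~\ref{cor:sum} and~\ref{cor:Dedekind}, and identify the answer as $\tfrac{\hat c}{12}\mu_{(f,G)}$ via Proposition~\ref{prop:mu}.

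For each pair $(g,h)$ write the summand as $P_{g,h}(y)=y^{\alpha_g}R_{g,h}(y)$, where $\alpha_g:={\rm age}(g)-\tfrac{n-n_g}{2}$, $J(g):=\{i\mid\lambda_i(g)=1\}$, and $R_{g,h}(y):=\prod_{i\in J(g)}p_i(y)$. A direct product-rule computation gives
\[ \left.\frac{d}{dy}\Bigl(y\frac{dP_{g,h}}{dy}\Bigr)\right|_{y=1}=\alpha_g^{2}R_{g,h}(1)+(2\alpha_g+1)R'_{g,h}(1)+R''_{g,h}(1). \]
Proposition~\ref{prop:lim}, combined with logarithmic differentiation and the case-split $\lambda_i(h)=1$ versus $\lambda_i(h)\neq 1$, yields explicit expressions: $R_{g,h}(1)=\prod_{i\in J(g)\cap J(h)}\bigl(1-\tfrac{1}{w_i}\bigr)$, while $R'_{g,h}(1)/R_{g,h}(1)$ and $R''_{g,h}(1)/R_{g,h}(1)$ involve $(1-2w_i)/12$, $\tfrac{1+\lambda_i(h)}{1-\lambda_i(h)}$, and $\tfrac{\lambda_i(h)}{(1-\lambda_i(h))^{2}}$. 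A crucial observation is the sawtooth identity $\alpha_g=\sum_{i=1}^{n}((a_i^g w_i))$, where $a_i^g$ is the $i$-th coordinate of $g$; this follows immediately from the definitions and identifies $\alpha_g^{2}$ with an expression of the type appearing on the right-hand side of Corollary~\ref{cor:Dedekind}.

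Next one sums over $h\in G$ with $g$ fixed, stratifying by $J(h)\cap J(g)\subset J(g)$ through the same inclusion-exclusion that underlies the proof of Proposition~\ref{prop:mu}. On each stratum the linear contributions $\sum_{i,\lambda_i(h)\neq 1}\tfrac{1+\lambda_i(h)}{1-\lambda_i(h)}$ vanish by Lemma~\ref{lem:p'}; the diagonal quadratic contributions $-\sum_i\tfrac{(1-2w_i)\lambda_i(h)}{(1-\lambda_i(h))^{2}}$ collapse via Corollary~\ref{cor:sum}; and the off-diagonal squared sum $\bigl(\sum_i\tfrac{1+\lambda_i(h)}{1-\lambda_i(h)}\bigr)^{2}$ is rewritten by Corollary~\ref{cor:Dedekind} as a Dedekind-type contribution $\sum_h\bigl(\sum_{i\in J(g)\setminus J(h)}((a_i^h w_i))\bigr)^{2}$. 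All character sums are thus eliminated in favour of the weights $w_i$ and the subgroup orders $|G^K|$.

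After these reductions the sum over $g$ telescopes, via the identity \eqref{eq:sign} that also drives Proposition~\ref{prop:mu}, into the overall factor $\tfrac{1}{12}\sum_{i=1}^{n}(1-2w_i)=\tfrac{\hat c}{12}$ multiplied by
\[ \frac{(-1)^{n}}{|G|}\sum_{I\subset I_0}\prod_{i\in I}\Bigl(1-\tfrac{1}{w_i}\Bigr)\sum_{I\subset J\subset I_0}(-1)^{|J|-|I|}|G^J|^{2}=\mu_{(f,G)}. \]
The hardest step is this final combinatorial matching: one must verify that the age-squared piece $\alpha_g^{2}R_{g,h}(1)$, the weight piece coming from Corollary~\ref{cor:sum}, and the Dedekind piece coming from Corollary~\ref{cor:Dedekind} -- each naturally indexed by a different subset of $I_0$ -- reorganise into the single double-sum over $I\subset J\subset I_0$ of Proposition~\ref{prop:mu}, with the $(1-2w_i)/12$ factors accumulating over the full range $i\in I_0$ rather than only over the partial index sets $J(g)$ or $J(h)$.
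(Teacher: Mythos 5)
Your outline reproduces the paper's proof essentially step for step: the same limit formula ${\rm Var}_{(f,G)}=\lim_{y\to 1}\frac{d}{dy}\bigl(y\frac{d}{dy}\chi(f,G)(y)\bigr)$, the same per-$(g,h)$ product-rule decomposition into an age-squared piece, linear pieces killed by Lemma~\ref{lem:p'}, and quadratic pieces handled by Corollary~\ref{cor:sum} and Corollary~\ref{cor:Dedekind}, followed by the inclusion--exclusion of Proposition~\ref{prop:mu} via Formula~(\ref{eq:sign}). The only point left implicit in your final ``reorganisation'' is that the age-squared and Dedekind pieces cancel exactly (the identity $A+B=0$ in the paper's notation), which is precisely what your sawtooth identity $\alpha_g=\sum_i((a_i^g w_i))$ combined with Corollary~\ref{cor:Dedekind} and Formula~(\ref{eq:sign}) delivers, so that only the $(1-2w_i)/12$ contributions survive and accumulate to $\frac{1}{12}\hat{c}\cdot\mu_{(f,G)}$.
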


\begin{proof} We use the notation introduced in the proof of Proposition~\ref{prop:mu}. 
By Proposition~\ref{prop:lim} and Lemma~\ref{lem:p'} we have
\[ \lim_{y \to 1}\frac{d}{dy} \left(y\frac{d}{dy}\chi(f,G)(y)\right) = A + B + C, \]
where
\begin{eqnarray*} 
A & := & \frac{(-1)^n}{|G|}  \sum_{J, \atop J \subset I_0} \sum_{g \in G_J} \left( {\rm age}(g) - \frac{n-n_g}{2}\right)^2 \left[ \sum_{I, \atop I \subset J} \prod_{i \in I} \left( 1- \frac{1}{w_i} \right) |G_{I,J}| \right], \\
B & := & \frac{(-1)^n}{|G|} \sum_{J, \atop J \subset I_0} |G_J| \left[ \sum_{I, \atop I \subset J} \prod_{i \in I} \left( 1- \frac{1}{w_i} \right) \sum_{h \in G_{I,J}} \frac{1}{4} \left( \sum_{j \in J \setminus I} \frac{1+ \lambda_j(h)}{1- \lambda_j(h)} \right)^2 \right], \\
C & := & \frac{(-1)^n}{|G|} \sum_{J, \atop J \subset I_0} |G_J|  \times \\
 &  & {}
\left[ \sum_{I, \atop I \subset J} \prod_{i \in I} \left( 1- \frac{1}{w_i} \right) \left( |G_{I,J}| \left( \sum_{i \in I} \frac{1-2w_i}{12} \right) - \sum_{h \in G_{I,J}} \sum_{j \in J, \atop j \not\in I}   \frac{(1-2w_j)\lambda_j(h)}{(1-\lambda_j(h))^2} \right) \right].
\end{eqnarray*}

a) We first show that $A+B=0$. We first take the sums in $A$ and $B$ in a different order:
\begin{eqnarray*}
A & = & \frac{(-1)^n}{|G|} \sum_{I, \atop I \subset I_0} \prod_{i \in I} \left( 1- \frac{1}{w_i} \right) A_I, \ A_I:= \sum_{J, \atop I \subset J \subset I_0} \sum_{g \in G_J} \left( {\rm age}(g) - \frac{n-n_g}{2}\right)^2 |G_{I,J}|, \\
B & = & \frac{(-1)^n}{|G|} \sum_{I, \atop I \subset I_0} \prod_{i \in I} \left( 1- \frac{1}{w_i} \right) B_I, \ B_I :=  \sum_{J, \atop I \subset J \subset I_0} |G_J| \left( \sum_{h \in G_{I,J}} \frac{1}{4} \left( \sum_{j \in J \setminus I} \frac{1+ \lambda_j(h)}{1- \lambda_j(h)} \right)^2 \right).
\end{eqnarray*}
Now let $I \subset I_0$ be fixed. Let $\lambda_i(g)={\bf e}[a_iw_i]$. Then we have on one hand:
\begin{eqnarray*}
A_I & = &  \sum_{J, \atop I \subset J \subset I_0} |G_{I,J}| \sum_{g \in G_J} \left( \sum_{j \in I_0 \setminus J} ((a_jw_j)) \right)^2 \\
& = & \sum_{J, \atop I \subset J \subset I_0} |G_{I,J}| \sum_{K, \atop J \subset K \subset I_0} (-1)^{|K|-|J|}  \sum_{g \in G^K} \left( \sum_{j \in I_0 \setminus K} ((a_jw_j)) \right)^2.
\end{eqnarray*}
On the other hand we have by Corollary~\ref{cor:Dedekind}
\begin{eqnarray*}
B_I & = & \sum_{J, \atop I \subset J \subset I_0} |G_J|  \sum_{h \in G_{I,J}} \frac{1}{4} \left( \sum_{j \in J \setminus I} \frac{1+ \lambda_j(h)}{1- \lambda_j(h)} \right)^2  \\
& = & \sum_{J, \atop I \subset J \subset I_0} |G_J| \sum_{K, \atop I \subset K \subset J} (-1)^{|K|-|I|}   \sum_{h \in G^K}  \frac{1}{4} \left( \sum_{j \in J \setminus K} \frac{1+ \lambda_j(h)}{1- \lambda_j(h)} \right)^2 \\
& = & - \sum_{J, \atop I \subset J \subset I_0} |G_J| \sum_{K, \atop I \subset K \subset J} (-1)^{|K|-|I|} |G^K| \sum_{h \in G^K} \left( \sum_{j \in J \setminus K} ((a_jw_j)) \right)^2 .
\end{eqnarray*}
For $I \subset K \subset J  \subset I_0$ let 
\[ s(K,J):=\sum_{g \in G^K} \left( \sum_{j \in J \setminus K} ((a_jw_j)) \right)^2 \]
Then 
\begin{eqnarray*}
A_I & = & \sum_{K, \atop I \subset K \subset I_0} \sum_{J, \atop  I \subset J \subset K} (-1)^{|K|-|J|}  |G_{I,J}| s(K,I_0) \\
 & := & \sum_{K, \atop I \subset K \subset I_0} \sum_{J, \atop  I \subset J \subset K} (-1)^{|K|-|J|} \left( \sum_{L, \atop I \subset L \subset J} (-1)^{|L|-|I|} |G^L| \right) s(K,I_0)\\
& = & \sum_{L, \atop I \subset L \subset I_0} \sum_{K, \atop L \subset K \subset I_0} \left( \sum_{J, \atop  L \subset J \subset K} (-1)^{|K|+|L|-|I|-|J|} \right) |G^L| s(K,I_0)\\
& = & \sum_{K, \atop I \subset K \subset I_0} (-1)^{|K|-|I|} |G^K| s(K,I_0)
\end{eqnarray*}
by Formula~(\ref{eq:sign}). 
On the other hand, we have
\begin{eqnarray*}
B_I & = & - \sum_{K, \atop I \subset K \subset I_0} \sum_{J, \atop K \subset J \subset I_0} (-1)^{|K|-|I|} |G_J| |G^K| s(K,J) \\
& = & - \sum_{K, \atop I \subset K \subset I_0} \sum_{J, \atop K \subset J \subset I_0} (-1)^{|K|-|I|} \left( \sum_{L, \atop J \subset L \subset I_0} (-1)^{|L|-|J|} |G^L| \right) |G^K| s(K,J) \\
& = & - \sum_{L, \atop I \subset L \subset I_0}\sum_{K, \atop I \subset K \subset L}  \left(  \sum_{J, \atop L \subset J \subset I_0} (-1)^{|K|+|L|-|I|-|J|} \right) |G^L| |G^K| s(K,J) \\
 & = & - \sum_{K, \atop I \subset K \subset I_0} (-1)^{|K|-|I|} |G^K| s(K,I_0) = - A_I,
\end{eqnarray*}
again by Formula~(\ref{eq:sign}) and since $|G^{I_0}|=1$. This shows that $A+B=0$.

b) We now consider the term $C$.
Let $J \subset I_0$, $I \subset J$ and $j \in J$, $j \not\in I$. Then it follows from Corollary~\ref{cor:sum} that
\[ - \sum_{h \in G_{I,J}} \frac{\lambda_j(h)}{(1-\lambda_j(h))^2} = \frac{1}{12} m_{I,j}^J, \]
where
\[ m_{I,j}^J := 
\sum_{K, j \not\in K, \atop I \subset K \subset J} (-1)^{|K|-|I|}|G^{K \cup \{ i\} }| \left( \left| G^K/G^{K \cup \{ i\} }\right|^2 -1 \right). \]
By a) we have
\begin{eqnarray*}  
\lefteqn{\lim_{y \to 1}\frac{d}{dy} \left(y\frac{d}{dy}\chi(f,G)(y)\right) = C } \\
& = & \frac{(-1)^n}{|G|} \sum_{J, \atop J \subset I_0} |G_J| \left[ \sum_{I, \atop I \subset J} \prod_{i \in I} \left( 1- \frac{1}{w_i} \right) \left( |G_{I,J}| \left( \sum_{i \in I} \frac{1-2w_i}{12} \right) + \sum_{j \in J, \atop j \not\in I} m_{I,j}^J \left( \frac{1-2w_j}{12} \right) \right) \right] \\
& =  & {}  \frac{(-1)^n}{|G|} \sum_{I, \atop I \subset I_0} \prod_{i \in I} \left( 1- \frac{1}{w_i} \right) \left[ \sum_{J, \atop I \subset J \subset I_0} |G_J| \left( |G_{I,J}| \left( \sum_{i \in I} \frac{1-2w_i}{12} \right) + \sum_{j \in J, \atop j \not\in I} m_{I,j}^J \left( \frac{1-2w_j}{12} \right) \right) \right] .
\end{eqnarray*}
Now let $I \subset I_0$ and $j \not\in I$ be fixed. Then
\begin{eqnarray*}
\lefteqn{ \sum_{J, j \in J, \atop I \subset J \subset I_0}  |G_J] m_{I,j}^J }\\
& = & \sum_{J, j \in J, \atop I \subset J \subset I_0} \left( \sum_{K, \atop J \subset K \subset I_0} (-1)^{|K|-|J|}|G^K| \right) \left( \sum_{L, j \not\in L, \atop I \subset L \subset J} (-1)^{|L|-|I|} |G^{L \cup \{ j \}}| \left( \left| G^L/G^{L \cup \{ j \}} \right|^2 -1 \right) \right) \\
 & = & {} \sum_{L, j \not\in L, \atop I \subset L \subset I_0} \sum_{K, j \in K, \atop L \subset K \subset I_0} \left( \sum_{J, j \in J, \atop L \subset J \subset K} (-1)^{|K|+|L|-|I|-|J|} \right) |G^K||G^{L \cup \{ j \}}| \left( \left| G^L/G^{L \cup \{ j \}} \right|^2 -1 \right).
\end{eqnarray*}
Since $j \not\in L$ but $j \in J$, the case $J=L$ and hence also $K=L$ is excluded in the sum
\[  \sum_{J, j \in J, \atop L \subset J \subset K} (-1)^{|K|+|L|-|I|-|J|}. \]
Therefore
\[  \sum_{J, j \in J, \atop L \subset J \subset K} (-1)^{|K|+|L|-|I|-|J|} = \left\{ \begin{array}{cl} (-1)^{|L|-|I|} & \mbox{for } K=L \cup \{ j \}, \\
0 & \mbox{otherwise.} \end{array} \right.
\]
Hence we obtain
\begin{eqnarray*}
\sum_{J, j \in J, \atop I \subset J \subset I_0}  |G_J] m_{I,j}^J
& = & \sum_{L, j \not\in L, \atop I \subset L \subset I_0} (-1)^{|L|-|I|} |G^{L \cup \{ j \}}|^2 \left( \left| G^L/G^{L \cup \{ j \}} \right|^2 -1 \right). \\
& = & {} \sum_{L, j \not\in L, \atop I \subset L \subset I_0} (-1)^{|L|-|I|} \left( |G^L|^2 - |G^{L \cup \{ j \}}|^2 \right) \\
& = & {} \sum_{K, \atop I \subset K \subset I_0} (-1)^{|K|-|I|} |G^K|^2.
\end{eqnarray*}
Therefore the statement follows from Proposition~\ref{prop:mu}.
\end{proof}
%%%%%%%%%%%%%%%%%%%%%%%%%%%%%%%%%%%%%%%%%%%%%%%%%%%%%%%%%%%%%%%%
\section{Variance of the exponents for cusp singularities with group actions}
Let $f(x_1,x_2,x_3):= x_1^{\alpha_1} + x_2^{\alpha_2} + x_3^{\alpha_3} - x_1x_2x_3$ and $G$ be a finite subgroup of $SL_n(\CC)$ acting diagonally on 
$\CC^n$ under which $f$ is invariant. Let $K_i \subset G$ be the maximal subgroup fixing the coordinate $x_i$, $i=1,2,3$. Define numbers $\gamma_1, \ldots , \gamma_s$ by
\[ (\gamma_1, \ldots, \gamma_s) = \left( \frac{\alpha_i}{|G/K_i|} \ast |K_i|, i=1,2,3 \right) ,\]
where we omit numbers which are equal to one on the right-hand side. 
Define a number $\chi_{(f,G)}$ by 
\[
\chi_{(f,G)}:=2-2j_G+\sum_{i=1}^s\left(\frac{1}{\gamma_i}-1\right).
\]
\begin{lemma} \label{lem:cusp}
Let the pair $(f,G)$ be as above.
\begin{enumerate}
\item The Milnor number of the pair $(f,G)$ is given by
\begin{equation}
\mu_{(f,G)} = 2- 2j_G + \sum_{i=1}^s (\gamma_i -1). 
\end{equation}
\item The set of exponents for the pair $(f,G)$ is given by 
\begin{multline}
\left\{1,2\right\}\coprod \left\{\frac{1}{\gamma_1}+1,\frac{2}{\gamma_1}+1,\dots, 
\frac{\gamma_1-1}{\gamma_1}+1\right\}\\
\coprod \left\{\frac{1}{\gamma_2}+1,\frac{2}{\gamma_2}+1,\dots, 
\frac{\gamma_2-1}{\gamma_2}+1\right\}\coprod\dots\\
\dots\coprod \left\{\frac{1}{\gamma_s}+1,\frac{2}{\gamma_s}+1,\dots, 
\frac{\gamma_s-1}{\gamma_s}+1\right\}
\end{multline}
\end{enumerate}
\end{lemma}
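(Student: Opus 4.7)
The plan is to decompose $\H_{f,G} = \bigoplus_{g \in G}(\H_{f^g})^G(-{\rm age}(g),-{\rm age}(g))$ and classify the contributing elements $g \in G$ by $n_g = \dim {\rm Fix}\,g$. Since $G \subset {\rm SL}_3(\CC)$ acts diagonally, no nontrivial $g$ can fix exactly two coordinates (the determinant condition would force it to fix the third as well), leaving three cases: (a) $g=1$, with $f^g = f$; (b) $g \in K_i \setminus \{1\}$ for some unique $i$, with $f^g = x_i^{\alpha_i}$, and automatically of age $1$ since $\lambda_i(g)=1$ combined with $\det g = 1$ gives $\lambda_j(g)\lambda_k(g) = 1$; (c) $n_g = 0$, giving a junior (age $1$) or its inverse anti-junior (age $2$), with $2j_G$ elements in total.

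For sector (a), I would use the Jacobian relations $x_j x_k \equiv \alpha_i x_i^{\alpha_i-1}$ (coming from $\partial f/\partial x_i = 0$) to reduce $\Omega_f$ to the monomial basis $\{1,\, x_i^k\ (1 \leq k \leq \alpha_i-1,\ i=1,2,3),\, x_1x_2x_3\}$ of the expected size $\alpha_1+\alpha_2+\alpha_3 - 1 = \mu(f)$, and read off exponents $1$, $1+k/\alpha_i$, and $2$ from Theorem~\ref{thm:cuspintro}. Because $G \subset {\rm SL}_3$, the elements $1$ and $x_1x_2x_3$ are always $G$-invariant, while $x_i^k$ is invariant iff the character $\lambda_i^k$ of $G$ is trivial, equivalently $|G/K_i| \mid k$; writing $\gamma_i^{(i)} := \alpha_i/|G/K_i|$ this gives $\gamma_i^{(i)} - 1$ invariants per branch with exponents $1+m/\gamma_i^{(i)}$. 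For sector (b), $\Omega_{f^g}$ is spanned by $x_i^{k-1}dx_i$ ($k=1,\ldots,\alpha_i-1$) with exponents $k/\alpha_i$ by Theorem~\ref{thm:st}; the same divisibility condition $|G/K_i| \mid k$ selects $\gamma_i^{(i)}-1$ invariants per $g$, and the age shift by $1$ pushes the exponents to $1+m/\gamma_i^{(i)}$. Combining the $|K_i|-1$ nontrivial elements of $K_i$ with the single $g=1$ copy gives multiplicity $|K_i|$ for each such exponent; identifying $(\gamma_\ell)$ as the multi-list of $(\gamma_i^{(i)})_{i=1,2,3}$ each repeated $|K_i|$ times (with $=1$ entries omitted, since they contribute no branch exponents) recovers precisely the multi-set of~(ii).

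All contributions from sectors (a) and (b) occupy bi-degrees with $p+q = n = 3$, so come with sign $(-1)^{(p-n)+q} = +1$ in the alternating sum defining $\mu_{(f,G)}$, contributing $2 + \sum_\ell(\gamma_\ell - 1)$. Sector (c) yields one-dimensional pieces at bi-degrees $(1,1)$ for juniors and $(2,2)$ for anti-juniors, both with sign $(-1)^{-1} = -1$, contributing $-2j_G$; combining gives $\mu_{(f,G)} = 2 - 2j_G + \sum_\ell(\gamma_\ell - 1)$, which is~(i). The main difficulty in the plan is the exponent assignment on $\Omega_f$ in sector (a): unlike the weighted-homogeneous situation of Theorem~\ref{thm:st}, the cusp admits no quasi-homogeneous $\CC^\ast$-action directly grading $\Omega_f$, so the monomial-to-exponent matching relies on Theorem~\ref{thm:cuspintro}/\cite{AGV85} together with the observation that each basis monomial is an eigenvector of the commuting diagonal torus $\{(\mu_1,\mu_2,\mu_3)\in(\CC^\ast)^3 : \mu_i^{\alpha_i} = \mu_1\mu_2\mu_3 = 1\}$ preserving $f$, which determines the bi-graded spectral assignment on $H^{n-1}(Y_f,\CC)$.
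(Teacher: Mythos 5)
The paper offers no proof of this lemma at all: its ``proof'' is a citation to Corollary~5.13 and the proof of Theorem~5.12 of \cite{ET}. Your sector-by-sector decomposition of $\H_{f,G}$ is the natural self-contained route, and most of it is sound: the observation that a nontrivial diagonal element of ${\rm SL}_3(\CC)$ cannot fix exactly two coordinates, the computation that every $g\in K_i\setminus\{1\}$ has age $1$ with $f^g=x_i^{\alpha_i}$, the invariance criterion $|G/K_i|\mid k$ for $x_i^k$, the resulting multiplicity $|K_i|$ for each branch exponent $1+m|G/K_i|/\alpha_i$, and the pairing $g\mapsto g^{-1}$ matching the $j_G$ age-one elements with $n_g=0$ to $j_G$ age-two elements, each entering $\mu_{(f,G)}$ with sign $-1$. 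The combinatorics then reproduce part (i) exactly.

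There is, however, one genuine gap, which you flag but do not close: the equivariant refinement of Theorem~\ref{thm:cuspintro} in the untwisted sector. That theorem gives only the \emph{multiset} of exponents of $f$; to extract the $G$-invariant part you must know, for each bigraded piece $\H^{p,q}_f$, its structure as a $G$-representation, i.e.\ that the monomial $x_i^k\in\Omega_f$ genuinely lies in the piece with exponent $1+k/\alpha_i$. Since $f$ is not weighted homogeneous, the identification $\H_f\cong\Omega_f$ does not automatically align the monomial eigendecomposition with the Hodge/V-filtration bigrading, and when some $\alpha_i$ coincide, distinct branches share exponents, so monodromy eigenvalues alone cannot separate them; your appeal to the finite diagonal symmetry group states what must be true but does not prove it. This matching is exactly the content of the cited \cite{ET}. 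A second, smaller issue concerns part (ii): your own sector (c) shows $h^{1,1}(f,G)=h^{2,2}(f,G)=j_G$, so by the paper's definition the multiset of exponents acquires $j_G$ extra copies each of $1$ and $2$ beyond what the lemma lists, which you silently drop when claiming to ``recover precisely the multi-set of (ii)''. You should either note that (ii) records only the $p+q=n$ contributions or reconcile the statement with the definition; the $-2j_G$ in (i) and the variance computation in Theorem~\ref{thm:cusp} both confirm that these sectors do contribute to the Hodge numbers.
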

\begin{proof}
See Corollary 5.13 and the proof of Theorem 5.12 of \cite{ET}.
\end{proof}
We have the following formula for the variance.
Note that we have $\hat{c}=1$ by Theorem~\ref{thm:cuspintro}.
\begin{theorem} \label{thm:cusp}
Let the pair $(f,G)$ be as above.
The variance of the set of exponents of $(f,G)$ is given by 
\begin{equation}\label{eq:cusp}
{\rm Var}_{(f,G)}= \frac{1}{12} \mu_{(f,G)}+\frac{1}{6}\chi_{(f,G)}=
\frac{1}{12}\hat{c}\cdot\mu_{(f,G)}+\frac{1}{6}\chi_{(f,G)}.
\end{equation}
\end{theorem}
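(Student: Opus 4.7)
The plan is to prove the identity by directly computing ${\rm Var}_{(f,G)}$ from the explicit list of exponents in Lemma~\ref{lem:cusp}(ii), combined with a careful sector-by-sector sign analysis using the decomposition $\H_{f,G}=\bigoplus_{g\in G}(\H_{f^g})^G(-{\rm age}(g),-{\rm age}(g))$. The $g$-sector sits in bi-degrees $(p,q)$ with $p+q=n_g+2\,{\rm age}(g)$, so its contribution to ${\rm Var}_{(f,G)}$ carries the global sign $(-1)^{(p-n)+q}=(-1)^{n_g-1}$ (using $n=3$). Since $G\subset{\rm SL}_3(\CC)$, the case $n_g=2$ is excluded, so only $n_g\in\{0,1,3\}$ occurs. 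The identity sector ($n_g=3$) and the sectors with $n_g=1$ contribute with sign $+1$, while the $j_G$ junior elements (age $1$) and the $j_G$ senior elements (age $2$), in bijection via $g\leftrightarrow g^{-1}$, each contribute a one-dimensional piece with sign $-1$, giving the exponent $1$ (junior) or $2$ (senior) respectively.

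The set of exponents listed in Lemma~\ref{lem:cusp}(ii) accounts precisely for the $n_g\in\{1,3\}$ sectors (counted with sign $+1$), while the $2j_G$ exponents from the junior and senior sectors (each with sign $-1$) are the additional contributions that reconcile the cardinality of the listed set with $\mu_{(f,G)}=2-2j_G+\sum(\gamma_i-1)$. The $\{1,2\}$-piece of the listed set contributes $2\cdot(1/2)^2=1/2$ to the variance, and for each $i$ a direct computation gives the key elementary identity
\[
\sum_{k=1}^{\gamma_i-1}\left(\frac{k}{\gamma_i}-\frac{1}{2}\right)^2=\frac{(\gamma_i-1)(\gamma_i-2)}{12\gamma_i}=\frac{1}{12}(\gamma_i-1)+\frac{1}{6}\left(\frac{1}{\gamma_i}-1\right).
\]
Adding the junior/senior contribution $-2j_G\cdot(1/2)^2=-j_G/2$ yields
\[
{\rm Var}_{(f,G)}=\frac{1}{2}-\frac{j_G}{2}+\sum_{i=1}^s\left[\frac{1}{12}(\gamma_i-1)+\frac{1}{6}\left(\frac{1}{\gamma_i}-1\right)\right],
\]
which rearranges immediately to $\frac{1}{12}\mu_{(f,G)}+\frac{1}{6}\chi_{(f,G)}$ using the formulas for $\mu_{(f,G)}$ and $\chi_{(f,G)}$.

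The main obstacle is the bookkeeping underlying the first paragraph: one must verify concretely that the listed set in Lemma~\ref{lem:cusp}(ii) captures only the $n_g\in\{1,3\}$ sectors and that each junior (resp.\ senior) element contributes exactly one copy of the exponent $1$ (resp.\ $2$) with sign $-1$. This requires going back to the sector decomposition of $\H_{f,G}$ and the mixed Hodge structures on the singularities $f^g$, using the reference \cite{ET}; once these contributions are identified, the remainder is a one-line arithmetic identity.
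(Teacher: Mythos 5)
Your proof is correct and is precisely the ``elementary calculation'' the paper invokes without detail: one sums $(q-\tfrac{3}{2})^2$ over the exponents listed in Lemma~\ref{lem:cusp}(ii) with sign $+1$ and adds the $2j_G$ further exponents $1$ and $2$ coming from the age-$1$ and age-$2$ elements with trivial fixed locus, each entering with sign $(-1)^{(p-n)+q}=-1$. Your sector/sign bookkeeping is the consistent reading of Lemma~\ref{lem:cusp} --- it is exactly what reconciles the cardinality $2+\sum_i(\gamma_i-1)$ of the listed set with $\mu_{(f,G)}=2-2j_G+\sum_i(\gamma_i-1)$, it reduces to Theorem~\ref{thm:cuspintro} when $G$ is trivial, and the closing identity $\sum_{k=1}^{\gamma-1}\left(\tfrac{k}{\gamma}-\tfrac{1}{2}\right)^2=\tfrac{(\gamma-1)(\gamma-2)}{12\gamma}=\tfrac{1}{12}(\gamma-1)+\tfrac{1}{6}\left(\tfrac{1}{\gamma}-1\right)$ checks out.
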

\begin{proof}
Some elementary calculation yields the statement.
\end{proof}
Note that the pair $(f,G)$ can be considered as a mirror partner of the orbifold curve (Deligne--Mumford stack) $\C$
which is a smooth projective curve of genus $j_G$ with $s$ isotropic points of orders $\gamma_1, \ldots, \gamma_s$ 
(cf. Theorem 7.1 of \cite{ET}).
The above formula for the variance is compatible with this observation. 
In particular, the dimension of $\C$ is $1$,  $\mu_{(f,G)}$ is the orbifold Euler number $\chi(\C)$ of $\C$ and 
$\chi_{(f,G)}$ is the orbifold Euler characteristic of $\C$, which is the degree of the first Chern class 
$c_1(\C)$ of $\C$.
Applying this to the formula in Theorem~\ref{thm:lwb}, we recover the equation \eqref{eq:cusp}.  
%%%%%%%%%%%%%%%%%%%%%%%%%%%%%%%%%%%%%

\end{document}